\DeclareSymbolFont{bbold}{U}{bbold}{m}{n}
\DeclareSymbolFontAlphabet{\mathbbold}{bbold}
\theoremstyle{plain} 
\newtheorem{thm}{Theorem}[section] 
\newtheorem{lem}[thm]{Lemma}
\newtheorem{cor}[thm]{Corollary}
\newtheorem{problem}[thm]{Problem}
\theoremstyle{definition}
\newtheorem{defn}[thm]{Definition}
\newtheorem{remark}[thm]{Remark}
\newcommand{\up}[1]{\textup{#1}}
\newcommand{\dom}{\operatorname{dom}}
\newcommand{\ran}{\operatorname{ran}}
\newcommand{\KL}{\mathsf{K}_L}
\newcommand{\KLn}[1]{\mathsf{K}_{L,#1}}
\newcommand{\KR}{\mathsf{K}_R}
\newcommand{\KRn}[1]{\mathsf{K}_{R,#1}}
\newcommand{\DD}{\mathsf{D}}
\newcommand{\RR}{\mathsf{R}}
\newcommand{\Ad}{\mathsf{A_d}}
\newcommand{\Ar}{\mathsf{A_r}}
\newcommand{\II}{\mathscr{I}}
\newcommand{\uu}{\mathsf{u}}
\newcommand{\Rel}{{\sf Rel}}
\newcommand{\join}{\mathbin{\ooalign{\hfil$\sqcup$\hfil\cr\hfil\raise0.28ex\hbox{\tiny\textup{+}}\hfil\cr}}}
\newcommand{\into}{\rightarrowtail}
\newcommand{\comp}{\mathbin{;}}
\newcommand{\conv}{\smile}
\def\c#1{{\mathcal #1}}
\def\nb#1{{$\bullet$ \marginpar{$\bullet$ #1}}}
\def\set#1{{\{#1\} }}
\def\P{{\bf P}}
\def\lrangle#1{\langle#1\rangle}
\def\E{{\bf E}}
\def\G{{\bf G}}
\newcommand*\bigcdot{\mathpalette\bigcdot@{.75}}
\newcommand*\bigcdot@[2]{\mathbin{\vcenter{\hbox{\scalebox{#2}{$\m@th#1\bullet$}}}}}
\begin{document}

\title[]{Minimal signatures with undecidability of representability by binary relations}
\author{Robin Hirsch}
\address{Department of Computer Science, University College London,
Gower Street, London WC1E 6BT
United Kingdom} \email{r.hirsch@ucl.ac.uk}
\author{Marcel Jackson}
\address{Department of Mathematical and Physical Sciences, La Trobe University, Victoria  3086,
Australia} \email{m.g.Jackson@latrobe.edu.au}
\author{Ja\v{s} \v{S}emrl}
\address{Division of Computing, University of the West of Scotland, London,
UK} \email{jas.semrl@uws.ac.uk}


\keywords{Complement, composition, binary relation, semigroup, representation, undecidability}

\begin{abstract}
A semigroup of binary relations (under composition) on a set $X$ is \emph{complemented} if it is closed under the taking of complements within $X\times X$.  We resolve a 1991 problem of Boris Schein by showing that the class of finite unary semigroups that are representable as complemented semigroups of binary relations is undecidable, so composition with complementation forms a minimal subsignature of Tarski's relation algebra signature that has undecidability of representability.  In addition we prove similar results for semigroups of binary relations endowed with unary operations returning the kernel and cokernel of a relation. We generalise to signatures which may include arbitrary, definable operations and provide a chain of weaker and weaker signatures, each definable in the previous signature, each having undecidability of representability, but whose limit signature includes composition only, which  corresponds to the well known, decidable and finitely axiomatised variety of semigroups.  All these results are also proved for  representability as binary relations over a finite set.
\end{abstract}

\maketitle
\section{Introduction} A \emph{representation} is an isomorphism from an algebra with a signature of abstract operations to an algebra of binary relations with specified, natural, set-theoretic operations.   
Every semigroup  $(A, \comp )$ may be represented as binary relations over some set under the operation of relational composition.  
All signatures considered here will include relational composition, which we denote by $\comp $ as this is standard in one of the main contexts of our contributions, and avoids confusion with the many other operations that are considered.   
The signature of relation algebra is $\{0, 1, -, +, 1', {}^\conv, \comp\}$,\footnote{Following a suggestion of Andr\'eka we adopt the following convention for the ordering of functions and relations: Booleans $0, 1, -, +, \cdot, \leq$ first, then non-Booleans, with both of these two parts put constants and functions first in increasing order of arity, then relations.} a representation is an isomorphism from an abstract algebra of this signature to an algebra of binary relations on some arbitrary set $X$, 
with relational composition, union, complement relative to a top binary relation, relational converse,   the empty set,  the top and the identity over $X$.  
A relation algebra is an algebra of this signature obeying certain axioms specified by Tarski.  
Not every relation algebra is representable \cite{lynI} and  there can be no algorithm that will determine whether a given finite algebra  has such a representation \cite{hirhod}.  Thus the \emph{representation problem} is undecidable, for the signature of relation algebra.  

As striking as this undecidability result may be, it was known as early as the 1940s that once the signature $\tau$ of consideration is expanded beyond $\{\comp\}$ alone,  the representation problem  becomes challenging.  
Tarski had already failed to fully classify the representable algebras in the full signature  of relation algebra, despite significant insights and development~\cite{tar}.  
By the 1950s, Lyndon~\cite{lynI,lynII} had provided a  complicated, highly technical and infinite axiomatisation for the abstract representable algebras, and shown that at least some further laws were necessary beyond those initially proposed by Tarski.  
By the 1960s, Monk \cite{mon} showed that no finite axiomatisation is possible, and  signatures weaker than the full relation algebra signature had started to achieve more attention. These weaker signatures may include any (proper) subset of 
$\set{0, 1, -, +, 1', {}^\conv, \comp}$, but may also include operators such as $\cdot, \DD, \RR$, etc.\ 
\emph{definable} by relation algebra terms,  represented as $\cap$, the restriction of the identity to the domain and range,  
etc.~respectively, and may also include relations such as $\leq$, definable by relation algebra equations, represented as~$\subseteq$.  
See Section~\ref{sec:operations} for several examples of such functions and relations, along with their definitions. Signatures are ordered by definability.  
J\'onsson~\cite{jon} considered the  signature $\{\cdot,{}^\conv, \comp\}$, Schein, Bredikhin \cite{bre,bresch,sch} and a number of other authors (see surveys \cite{sch70,sch91}) had considered variations such as $\{{}^\conv, \comp\}, \; \{\leq,{}^\conv, \comp\},\; \{\cdot, \comp\}$ and many others.  
While each signature can carry a different challenge, anecdotally the general theme is that classifications are difficult and complicated: no finite axiomatisation 
is  possible, and the majority fail to have the finite representation property (henceforth: FRP, meaning that finite representable algebras are representable as binary relations on a finite set).  
The main known exceptions to this are the following representation classes. 
\begin{itemize}
\item $\{\leq, \comp\}$ (where $\leq$ denotes a partial order intended to be represented as  containment), which Zareckii \cite{zar} showed is just the class of ordered semigroups, and all finite ordered semigroups are representable over finite sets.
\item $\{\cdot, \comp\}$ which Bredikhin and Schein \cite{bresch} showed is just the class of semigroups with an additional semilattice operation whose underlying order is preserved by composition, though the FRP fails in general~\cite{neu}.  Later Bredikhin~\cite{bre98} showed that a few extra axioms could capture constants representing the universal relation and the empty relation. 
\item $\{\leq, \DD,\RR,{}^\conv,\comp\}$, where $\DD$ and $\RR$ denote the \emph{domain} and \emph{range} operations respectively: $\DD(r)=\{(x,x)\mid  \exists y\ (x,y)\in r\}$, and dually for the $\RR$ operation.  Bredkhin~\cite{bre} and later Hirsch and Mikulas~\cite{hirmik} showed that some simple axioms are sufficient to ensure representability and this remains true when the identity $1'$ and $0$ are  added to the signature.
The FRP follows from the methodology of~\cite{hirmik}.
\end{itemize} 
The signature $\{\leq, \DD,\RR,{}^\conv,\comp\}$ is sometimes referred to as the ``oasis'' signature, as nearby signatures, richer or weaker,   exhibit challenging and complex behaviour (though $\leq$ can be dropped without the loss of the finite representation property~\cite{sem21}).

Undecidability of representability (or of finite representability) remains perhaps the most extreme negative property, implying both non-finite axiomatisability and failure of the finite representation property.  
Following the Hirsch-Hodkinson result for the Tarski signature, there have been a number of efforts to identify precisely how weak the signatures with this kind of negative behaviour can be.  
Hirsch and Jackson~\cite{hirjac} made a significant advance here with the introduction of a new method for signatures avoiding converse, and showed that signatures such as $\{+,\cdot,1', \comp\}$, $\{\backslash,1',\comp\}$, $\{\Rightarrow,1', \comp\}$ and $\{-,\leq,1', \comp\}$ exhibit undecidability of representability.  These signatures are also shown to exhibit undecidability of finite representability, a property that is currently still open for the original Tarski signature.
Neuzerling~\cite{neu} later showed that the Hirsch-Jackson construction continues to work for the weaker signatures $\{+,\cdot, \comp\}$ and $\{-,\leq, \comp\}$, where $1'$ is omitted, and more recently Lewis-Smith and \v{S}emrl~\cite{L-SS} have shown the same for $\{\Rightarrow,\comp\}$.  The idea has been further employed by Hirsch and \v{S}emrl \cite{hirsem} in the context of the so-called ``demonic composition''.  In all cases, these negative results hold for any richer signature provided converse is omited  (though more exotic operations such as reflexive-transitive closure operation can be added as well~\cite{hirjac}).  The tiling method of Hirsch and Hodkinson~\cite{hirhod} also extends downward to establish undecidability of representability for any subsignature of Tarski's signature that contains J\'onsson's~\cite{jon} signature $\{\cdot,{}^\conv, \comp\}$, though the undecidability of finite representability remains open for all of these.

\subsection{Results} The present paper provides three results concerning the boundaries of decidability and undecidability for representability and finite representability.  The abbreviation DR will mean \emph{decidability of representability} and DFR will mean \emph{decidability of finite representability}.  We use UDR and UDFR for the failure of these properties, that is, for undecidability of representability and finite representability.

From amongst Tarski's original~\cite{tar} signature of operations $\{0, 1, -, +,\cdot,1', {}^\conv,\comp\}$, we identify a minimal subsignature that has UDR, and also UDFR.
\begin{restatable}{thm}{thmmaincomplement}\label{thm:maincomplement}
The following two problems are undecidable\up:  Is a given finite  $\set{-, \comp }$-structure   $\set{-, \comp }$-representable as binary relations over some \up[finite\up] set $X$  \up(with complementation relative to  $X\times X$ and composition\up)?
\end{restatable}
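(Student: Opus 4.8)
The plan is to reduce an undecidable problem about semigroups or word problems to the $\{-,\comp\}$-representability question, following the template established by Hirsch–Jackson for converse-free signatures. The key observation is that with complementation available, we can recover a great deal of Boolean structure: although $\{-,\comp\}$ nominally contains only complement and composition, complement together with composition is expressive. In particular, the constant $1$ (the top relation $X\times X$) is term-definable, since for a representable structure the top relation $U$ satisfies $U = a \comp \bar a \comp a$ or can be pinned down as the unique element acting as a two-sided zero under composition among the images of $-$; and the empty relation $0 = \bar U$ is then definable as its complement. Once $0$ and $1$ are available we can try to reconstruct intersection via $x\cap y = \overline{\bar x \cup \bar y}$, but union is not directly present, so **the first serious step** is to determine exactly which Boolean and relational operations are term-definable from $\{-,\comp\}$ in the intended representation, and to axiomatise the abstract first-order theory of the representable $\{-,\comp\}$-structures well enough to carry out a reduction.

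**Next I would** set up the reduction itself. The standard approach is to encode an instance of an undecidable combinatorial problem—most naturally a tiling problem or the word problem for finitely presented semigroups, both of which underlie the earlier undecidability results cited in the excerpt (\cite{hirhod,hirjac})—as a finite $\{-,\comp\}$-structure $\mathbf A_P$ depending on a problem instance $P$, in such a way that $\mathbf A_P$ is $\{-,\comp\}$-representable (respectively, finitely representable) if and only if $P$ has a solution (respectively, a solution of the appropriate type). The forward direction, building a representation from a solution, is usually the routine half: one takes the base set $X$ to consist of the combinatorial objects (tiles, or words) and defines the composition and complement by the obvious set-theoretic operations, checking that the finitely many relations named in $\mathbf A_P$ close up correctly. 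The delicate half is the converse: one must show that \emph{any} representation of $\mathbf A_P$, over an arbitrary set, forces enough rigidity that a solution to $P$ can be decoded from it. Here complementation is the crucial lever, because $\bar x$ lets us express that two relations partition $X\times X$ and hence lets us impose exact, rather than merely one-sided, constraints on how points are related.

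**The hard part will be** controlling the interaction between complementation and composition in an \emph{arbitrary} representation, since complement is a highly non-local operation: the complement of a relation over the representing set $X$ depends on all of $X\times X$, so padding the base set with extra ``junk'' points can change complements in ways that threaten to destroy the intended encoding. The main technical obstacle is therefore to prove a \emph{rigidity} or \emph{normalisation} lemma showing that the defining equations of $\mathbf A_P$ prevent such junk from interfering—typically by arranging that the named relations and their complements jointly cover and separate the base points finely enough that every point of $X$ is forced into a role dictated by the combinatorial instance. To obtain UDFR simultaneously with UDR, I would ensure the encoding has the property that a solution yields a finite representation while a finite representation yields a finite solution, so that the same reduction decides both problems; this is where keeping the constructed structure and its forced representation finite, and verifying that complementation behaves coherently on the finite base, requires the most care.
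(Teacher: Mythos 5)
Your proposal is a reasonable meta-level template, but it is missing the two ideas that actually make the paper's proof work, and one of your concrete suggestions points in the wrong direction. First, the source problem: the paper does not reduce from tiling or from the word problem for finitely presented semigroups, but from the (square) partial group \emph{embedding} problem of Evans and Slobodsko\u{\i}, precisely because this feeds into Sapir's $\mathscr{H}$-embedding lemma (Lemma~\ref{lem:sap}): a square partial group embeds into a [finite] group iff it $\mathscr{H}$-embeds into a [finite] semigroup. The tiling method you mention is the Hirsch--Hodkinson route, and as the introduction notes it only descends to signatures containing $\{\cdot,{}^\conv,\comp\}$; it is not available here. Second, and decisively, the entire converse direction hinges on being able to \emph{express injectivity} of an element in the signature $\{-,\comp\}$. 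This is the formula $\II_3(a)\colon\ (-(a\comp 0'))\comp\uu\comp(-(0'\comp a))=\uu$ of Lemma~\ref{lem:inj}, which becomes available once one has normalised the representation so that a distinguished element is forced to be $X\times X$ (Lemma~\ref{lem:1}) and $1'$ is forced to be the identity after quotienting (Lemma~\ref{lem:2}), whence $0'=-1'$ is the inequality relation. With injectivity expressible, Lemma~\ref{lem:H} shows that injective elements with common domain (resp.\ range) are $\mathscr{R}$-related (resp.\ $\mathscr{L}$-related) in $\Rel(X)$, so any representation of the constructed structure $\E_1({\bf P})$ forces the homotopy $a\mapsto a_{0,1}$, $b\mapsto b_{1,2}$, $c\mapsto c_{0,2}$ to be an $\mathscr{H}$-embedding, and Sapir's lemma decodes a group extension of ${\bf P}$. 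Your proposal gestures at ``rigidity'' but never identifies injectivity as the property to be captured, nor the $\mathscr{H}$-class mechanism that converts it into a solution of the source problem.

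A smaller but genuine error: your claim that the top relation is term-definable, e.g.\ as $a\comp\bar a\comp a$, is not sound and is not how the paper proceeds. The paper instead isolates, inside the \emph{constructed} algebra, a specific element $e$ satisfying the abstract equations $e\comp e=e$, $(-e)\comp e=-e=e\comp(-e)$ and $(-e)\comp(-e)=-e$, and proves (Lemma~\ref{lem:1}) that any $\{-,\comp\}$-representation must send such an $e$ to $X\times X$ and $-e$ to $\emptyset$. This also disposes of your ``junk points'' worry without any covering or separation argument: once $e^\theta=X\times X$ is forced and the base is quotiented by the local equivalence relation $1'^\theta$, complementation is computed relative to exactly the right universe. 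Your forward direction is fine in spirit (the paper realises it concretely via the Cayley-style map $g_{i,j}\mapsto\{((i,h),(j,h*g))\mid h\in G\}$ on $B_3(G)$, with $\bar a_{i,j}$ sent to the union of all $g_{i,j}^\theta$ for $g\neq a$), and finiteness is inherited from finiteness of the group, so UDR and UDFR come from the same reduction as you anticipated.
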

The theorem covers two problems because here and below the parenthesised ``[finite]'' may be included or omitted, consistently throughout. 
\begin{cor}
    Let $S$ be any signature containing $\set{-, \comp}$ and definable by $\{0, 1,-,\allowbreak +, 1', \comp\}$, the signature of Boolean monoids.  The following problem is undecidable: Is a given  finite $S$-structure $S$-representable as binary relations \up[over a finite set\up]?
\end{cor}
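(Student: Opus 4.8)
The plan is to reduce from the undecidable problem underlying Theorem~\ref{thm:maincomplement}. That theorem establishes undecidability of $\{-,\comp\}$-representability by a reduction from some undecidable problem $P$, sending an instance $I$ to a finite $\{-,\comp\}$-structure; the key observation I would exploit is that these hard structures may be taken to be the $\{-,\comp\}$-reducts of finite \emph{Boolean monoids} $B_I$ computable from $I$. Granting this, for a fixed target signature $S$ with $\{-,\comp\}\subseteq S$ and every symbol of $S$ definable by a term (or equation) over $\{0,1,-,+,1',\comp\}$, I would map each instance $I$ to the $S$-reduct $B_I^S$ of $B_I$. This map is computable: each operation of $S$ is obtained by evaluating its defining Boolean monoid term on the finite algebra $B_I$, and the inclusion $\{-,\comp\}\subseteq S$ ensures the complement and composition of $B_I$ are retained.

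It then remains to verify that $B_I^S$ is $S$-representable (over $X$, respectively over finite $X$) if and only if $I$ is a positive instance of $P$; undecidability of $S$-representability follows immediately. For one direction I would simply forget operations: since $\{-,\comp\}\subseteq S$, any $S$-representation of $B_I^S$ restricts to a $\{-,\comp\}$-representation of the reduct $B_I^{\{-,\comp\}}$, whence $I$ is positive by the correctness of the reduction in Theorem~\ref{thm:maincomplement}. For the converse I would use that a positive instance $I$ yields a representation of $B_I$ as a genuine Boolean monoid of relations, realising $0,1,-,+,1',\comp$ as $\emptyset$, $X\times X$, complement, union, the diagonal, and composition. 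Because every symbol of $S$ is defined by a Boolean monoid term and this representation interprets all Boolean monoid operations correctly, restricting it to the symbols of $S$ produces an $S$-representation of $B_I^S$ on the same base $X$ (finite whenever the Boolean monoid representation is).

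The only genuine content beyond this bookkeeping lies in the appeal to the internals of Theorem~\ref{thm:maincomplement}, and that is where I expect the one real obstacle. The forward direction of the equivalence is free, but the converse requires that the ``if'' half of the reduction in Theorem~\ref{thm:maincomplement} produces a representation respecting \emph{all} the Boolean monoid operations, not merely $-$ and $\comp$; one must therefore confirm that the construction there yields a full Boolean monoid of relations rather than only a $\{-,\comp\}$-structure. This is exactly the property that already sandwiches $\{-,\comp\}$-representability of the reduct between full Boolean monoid representability and positivity of $I$, so no new combinatorics is needed beyond inspecting the construction; the corollary is then a definability-and-forgetfulness layer on top of the theorem. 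The argument applies verbatim to the finite case by keeping the base set finite throughout, which is why the bracketed ``[over a finite set]'' may be included or omitted uniformly.
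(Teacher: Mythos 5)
Your outline is the intended argument (the paper leaves this corollary unproved), and you have correctly located the one real obstacle; but the premise on which your reduction rests is not literally true of the construction in this paper, and the repair should be made explicit. The hard instances used to prove Theorem~\ref{thm:maincomplement} are the structures $\E_1(\P)$ of \eqref{eq:m1}, and these are \emph{not} Boolean monoids: the base $E_1^\Sigma(\P)$ admits only formal sums with at most one group-labelled entry per row and per column, so it is not closed under $+$ (for instance $a_{0,1}+b_{0,1}$ with $a\neq b$ is excluded), and the Remark following the proof of Theorem~\ref{thm:mainkernel} explicitly warns that the arguments there break when $+$ is adjoined. What rescues your plan is the bracketed sentence at the start of Section~\ref{sec:constructions}: $\E_1(\P)$ is a subreduct of the genuine finite Boolean monoid $M(\P)$ of \cite{hirjac}, so you should take your $B_I$ to be $M(\P)$ rather than the structure actually named in the proof of Theorem~\ref{thm:maincomplement}. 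With that substitution the forward direction is as you describe: if $\P$ embeds in a [finite] group $\G$, then $M(\P)$ embeds in the complex algebra of the Brandt groupoid over $\G$, giving a Boolean monoid of relations over $3\times G$ in which $1$ is universal and $1'$ is the identity, whence every term-defined operation of $S$ is interpreted in accordance with its required three-variable semantic definition (this last agreement, checkable against the table in Section~\ref{sec:operations}, is a small point your write-up passes over). For the converse you need a touch more than ``correctness of the reduction'': an $S$-representation of the $S$-reduct of $M(\P)$ restricts to a $\{-,\comp\}$-representation of $M(\P)$, hence of its $\{-,\comp\}$-subalgebra on $E_1^\Sigma(\P)$, and then Lemma~\ref{lem:zz} --- via Lemmas~\ref{lem:2} and~\ref{lem:5}, which are already stated for arbitrary simple Boolean monoids --- returns the embedding of $\P$ into a [finite] group. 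Once these two adjustments are made, your proof is complete and is in substance the one the authors intend.
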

An exploration of the signature $\{-,\comp\}$ in terms of possible characterisation 
is given as an open problem in Schein's~1991 survey~\cite{sch91}, and  the UDR, UDFR and finite axiomatisability is Problem~3.10 of Neuzerling~\cite{neu}; these are solved\footnote{In terms of Schein's problem: technically, recursive axiomatisations do exist and can be systematically constructed via what Schein calls the Fundamental Theorem of Relation Algebras~\cite{sch70}, but our results show that such an axiomatisation can never be simple enough to be recursively verifiable on finite algebras.} by Theorem~\ref{thm:maincomplement}.

 However, Theorem~\ref{thm:maincomplement} does not cover signatures including functions definable using ${}^\conv$.
Two examples of such  functions are the unary $\KL, \KR$, defined in relation algebra by $\KL(r)=r\comp r^\conv,\;\KR(r)=r^\conv\comp r$.
The next theorem shows that the signature $\{\KL,\KR, \comp \}$ has UDR and UDFR, even though it is a term reduct of the oasis signature $\{\leq, \DD, \RR, {}^\conv,\comp \}$ with the FRP and with DR, further reinforcing  the ``oasis'' character.  
\begin{restatable}{thm}{thmmainkernel}\label{thm:mainkernel}
Let $\tau$ be a signature containing $\{\KL,\KR, \comp \}$ and contained within either  $\{0, 1, -, \leq,0', 1', \DD,\RR,\Ad,\Ar,\KL,\KR,\join,\comp \}$ or $\{0,1,  \cdot, 0', 1',\DD,\RR,\Ad,\Ar,\KL,\KR,\allowbreak \join,\comp\}$.  Then  $\tau$ has the UDR and UFDR property.
\end{restatable}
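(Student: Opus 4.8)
The plan is to prove both assertions at once with a single computable reduction, exploiting the fact that representability is antitone under signature reducts. Write $\Sigma$ for whichever of the two maximal signatures is fixed. From an instance $I$ of a suitable undecidable combinatorial problem---in the lineage of the tiling and partial-algebra reductions behind Theorem~\ref{thm:maincomplement} and the work of Hirsch--Hodkinson and Hirsch--Jackson---I would build one finite $\Sigma$-structure $A=A(I)$, computable from $I$, and feed its reducts $A\rest{\tau}$ to the representation problem for each intermediate signature $\tau$ with $\set{\KL,\KR,\comp}\subseteq\tau\subseteq\Sigma$. The elementary ingredient is that if $\tau_1\subseteq\tau_2$ then any $\tau_2$-representation is, after forgetting operations, a $\tau_1$-representation of the $\tau_1$-reduct; so a larger signature is only ever harder to represent.

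Everything then reduces to two endpoint facts about $A(I)$: \emph{(i)} if $I$ is positive then $A(I)$ is representable over a \emph{finite} base set as a full $\Sigma$-structure; and \emph{(ii)} if $I$ is negative then the weakest reduct $A(I)\rest{\set{\KL,\KR,\comp}}$ is not representable at all, even over an infinite set. Granting these, the squeeze is immediate and uniform in $\tau$. For positive $I$, the finite $\Sigma$-representation restricts to a finite $\tau$-representation of $A(I)\rest{\tau}$ for every $\tau\subseteq\Sigma$. For negative $I$, were some $A(I)\rest{\tau}$ with $\tau\supseteq\set{\KL,\KR,\comp}$ representable, forgetting down to $\set{\KL,\KR,\comp}$ would contradict \emph{(ii)}; so no $A(I)\rest{\tau}$ is representable, a fortiori not finitely. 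Thus $A(I)\rest{\tau}$ is representable iff it is finitely representable iff $I$ is positive, for every $\tau$ in the interval, which is exactly UDR and UDFR together.

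For fact \emph{(i)} I would exhibit the finite $\Sigma$-representation by hand on a base set coding the solution to $I$, using the kernels to lay down the ``row'' and ``column'' relations $\KL(r)$ and $\KR(r)$ that encode grid adjacency, and then checking that every remaining operation of $\Sigma$---the constants $0,1,0',1'$, the chosen Boolean operation ($-$ or $\cdot$), the domain/range operators $\DD,\RR$, the antidomain/antirange operators $\Ad,\Ar$, and $\join$---is realised by the corresponding set-theoretic operation on the base. Since these operations are term-definable over the oasis signature, their interpretations are forced and the verification is a finite check; the delicate point is to validate all of them while keeping the base set finite, so that the finite representation property is genuinely witnessed at the top of the interval.

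The main obstacle is fact \emph{(ii)}. Here one must show that the impoverished signature $\set{\KL,\KR,\comp}$---with no identity, no converse, no meet and no complement---already detects the obstruction in a negative instance, so that the grid structure can be recovered from composition and the two symmetric kernels alone. The crux is a definability lemma asserting that in any $\set{\KL,\KR,\comp}$-representation of $A(I)$ the elements earmarked as coordinates are forced to behave as genuine coordinates: that $\KL$ and $\KR$ pin down the common-successor and common-predecessor relations tightly enough that a representation would decode an actual solution to $I$. Carrying out this recovery with so few operations, and uniformly for both the complement-containing and the meet-containing choices of $\Sigma$, is the technical heart of the argument, and is precisely what stops $\set{\KL,\KR,\comp}$ from inheriting the decidability enjoyed by the oasis signature of which it is a reduct.
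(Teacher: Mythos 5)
Your high-level skeleton is the right one and matches the paper's: a computable reduction producing one finite structure per maximal signature, with the positive direction giving a finite representation of the full $\Sigma$-structure (hence of every reduct) and the negative direction killing representability already at the bottom signature $\{\KL,\KR,\comp\}$ (hence for every expansion). But there are two genuine gaps. First, your single-problem squeeze ``$A(I)\rest{\tau}$ is representable iff finitely representable iff $I$ is positive'' requires an undecidable source problem whose yes-instances always admit \emph{finite} witnesses; the problem actually used (the square partial group embedding problem of Evans/Slobodsko\u{\i}, as packaged in \cite{hirjac}) does not have this property --- a partial group can embed in an infinite group but in no finite group --- so the paper must run two parallel reductions, one from the embedding problem and one from the finite embedding problem, via the bracketed ``[finite]'' convention. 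As stated, your endpoint facts (i) and (ii) cannot both hold for the same instance set, so UDR and UDFR do not follow simultaneously from one reduction of the shape you describe.

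Second, and more importantly, the entire technical content lies in your fact (ii), and the mechanism you sketch for it --- kernels encoding ``row'' and ``column'' relations of a grid, from which a representation would ``decode a solution'' --- is the tiling paradigm of Hirsch--Hodkinson, which needs meet and converse and does not survive in the signature $\{\KL,\KR,\comp\}$. The paper's actual mechanism is semigroup-theoretic: in any unary semigroup where $\KL(x)=x*x'$ and $\KR(x)=x'*x$, one gets $\KL(x)\leq_{\mathscr{R}}x$ and $\KR(x)\leq_{\mathscr{L}}x$ for free, while in the constructed structure $\E_0(\P)$ one has $a_{ij}\leq_{\mathscr{R}}\KL(a_{ij})=e_{ii}$ and $a_{ij}\leq_{\mathscr{L}}\KR(a_{ij})=e_{jj}$; hence all elements $a_{ij}$ with fixed indices become $\mathscr{H}$-related in any representing semigroup, the natural homotopy becomes an $\mathscr{H}$-embedding, and Sapir's lemma (Lemma~\ref{lem:sap}) converts $\mathscr{H}$-embeddability into [finite] group embeddability. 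Without identifying this route (or an equivalent one), the ``definability lemma'' you appeal to is exactly the unproven heart of the theorem, not a step that can be deferred.
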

The functions  $\join, \Ad$ etc. will be defined in Section~\ref{sec:operations}.
We are able to recast the key idea of Theorem \ref{thm:mainkernel} in purely abstract terms.  A \emph{unary semigroup} is an algebra $( S,*,{}')$ consisting of an associative binary operation $*$ and a unary operation $'$: there is no restriction on the properties of the unary operation $'$.  Most well studied varieties of unary semigroups satisfy either $x''\approx x'$ (such as  the domain operation $\DD$ or the range operation $\RR$) or $x''\approx x$ (such as  negation $-$ or conversion ${}^\conv$).  Of this second kind, a broad class is the \emph{involuted semigroups}, where $'$ is usually written as ${}^{-1}$ and the laws $(x^{-1})^{-1}\approx x$ and $(x*y)^{-1}\approx y^{-1}*x^{-1}$ hold.  
Schein~\cite[Proposition~3]{sch} showed that the variety generated by the $\{{}^\conv,\comp\}$-algebras representable as binary relations is precisely the class of involuted semigroups, though the class of  algebras representable as systems of binary relations is a proper quasivariety.  \emph{Inverse semigroups} are a subvariety of involuted semigroups, additionally satisfying $x*x^{-1}*x\approx x$ and $x*x^{-1}*y*y^{-1}\approx y*y^{-1}*x*x^{-1}$.  Inverse semigroups have a classical Cayley style representation known as the Wagner-Preston representation (see~\cite{clipre} for example) that shows they are precisely the algebraic systems isomorphic to algebras of injective partial functions under composition and inverse.  Conveniently, the operations $\DD$ and $\RR$ coincide with $\KL$ and $\KR$ for injective partial functions, 
so that we are able to revisit a result of Gould and Kambites \cite[Theorem~3.4 and Corollary~4.3]{goukam} that proves the undecidability of representability of bi-unary semigroups as subreducts of inverse semigroups.  
\begin{restatable} {thm}{thmmainbiunary}\label{thm:mainbiunary}
Let $\mathcal{K}$ be any class of unary semigroups in signature $\{*,{}'\}$ that contains the variety of inverse semigroups.
The following problem is undecidable: given a finite bi-unary semigroup ${\bf S}$ in signature $\{\comp,\KL,\KR\}$,  does  ${\bf S}$ embed into the subreduct of a \up[finite\up] semigroup $( T,*,{}' )$ in $\mathcal{K}$, where $x\comp y, \;\KL(x),\; \KR(x)$ are interpreted as $x*y,\;\; x*x', x'*x$, respectively?
\end{restatable}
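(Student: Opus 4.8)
The plan is to reduce from Theorem~\ref{thm:mainkernel} through the world of injective partial functions, exploiting the coincidence of $\KL,\KR$ with $\DD,\RR$ on such functions. First I would record the dictionary between the abstract and concrete sides. By the Wagner--Preston representation every inverse semigroup is isomorphic to an algebra of injective partial functions under composition and inverse, and for such functions the products $x*x'$ and $x'*x$ are exactly the partial identities on the domain and range, that is $\DD(x)$ and $\RR(x)$, which in turn coincide with $x\comp x^\conv$ and $x^\conv\comp x$. Consequently a bi-unary semigroup ${\bf S}$ in signature $\{\comp,\KL,\KR\}$ embeds into the $\{\comp,\KL,\KR\}$-subreduct of an inverse semigroup if and only if ${\bf S}$ is representable by injective partial functions, with $\KL,\KR$ interpreted as domain and range; in the finite case a representation over a finite base set lands inside a finite symmetric inverse monoid, hence inside a finite inverse semigroup.

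Next I would invoke the reduction behind Theorem~\ref{thm:mainkernel}, which produces, from an undecidable source problem, finite $\{\KL,\KR,\comp\}$-structures that are representable by binary relations exactly on the positive instances. The central claim is that, for these particular structures, representability by binary relations is equivalent to representability by injective partial functions. One inclusion is immediate, since injective partial functions are binary relations on which $\KL$ and $\KR$ already agree with $r\comp r^\conv$ and $r^\conv\comp r$. For the reverse inclusion I would inspect the representation constructed in the proof of Theorem~\ref{thm:mainkernel} and check that on positive instances it is, or can be arranged to be, by injective partial functions, the kernel/cokernel operations forcing the represented elements to behave like partial bijections. Combined with the dictionary, every positive instance then embeds into the $\{\comp,\KL,\KR\}$-subreduct of a (finite) inverse semigroup, and since the variety of inverse semigroups is contained in $\mathcal{K}$ it embeds into a subreduct of some member of $\mathcal{K}$.

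It then remains to control an arbitrary class $\mathcal{K}$, which I would do by a sandwich argument. Because positive instances are already handled by the smallest admissible target, I only need the matching upper bound: that every negative instance fails to embed into a subreduct of any member of $\mathcal{K}$. As $\mathcal{K}$ is an arbitrary class of unary semigroups, it suffices to prove the stronger statement that if ${\bf S}$ embeds into the $\{\comp,\KL,\KR\}$-subreduct of any unary semigroup $(T,*,{}')$ at all, then ${\bf S}$ is representable by binary relations; applied to a negative instance, which has no such representation, this rules out every $T$. This is the crux and the main obstacle. The naive right-translation representation of $T$ will not suffice: for an unconstrained $'$ the relation $\hat t\comp\hat t^\conv$ is the kernel of right multiplication by $t$ rather than the graph of $x\mapsto x(tt')$, so a relational model cannot simply be read off from $T$. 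Instead I would re-run the combinatorial core of the Theorem~\ref{thm:mainkernel} construction under the weaker hypothesis, showing that associativity of $*$ together with the defining equations $\phi(s)*\phi(s)'=\phi(\KL(s))$ and $\phi(s)'*\phi(s)=\phi(\KR(s))$ already force a solution to the encoded problem; since no solution exists on a negative instance, no such $T$ can exist.

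Putting these together, on the reduction structures the four notions---representable by binary relations, representable by injective partial functions, embeddable into a subreduct of an inverse semigroup, and embeddable into a subreduct of an arbitrary unary semigroup in $\mathcal{K}$---all coincide, so the $\mathcal{K}$-embeddability problem inherits the undecidability of Theorem~\ref{thm:mainkernel}. The same argument with ``finite'' inserted throughout transfers UDFR as well, using that a finite injective-partial-function representation yields a finite inverse semigroup. The hardest step is the universal upper bound of the third paragraph, where the absence of any structural constraints on the unary operation of $T$ means the encoded combinatorics must be forced from associativity and the two defining products alone.
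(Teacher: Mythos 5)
There is a genuine gap, and it sits exactly where you flag ``the hardest step.'' Your overall architecture (positive instances land in an inverse semigroup, hence in $\mathcal{K}$; negative instances must be excluded from \emph{every} unary semigroup in $\mathcal{K}$) is the right shape, and your first paragraph is sound: a finite square partial group $\P$ embedding into a finite group $G$ gives an embedding of the reduction structure into the inverse semigroup $B_3(G)$, on which $x*x^{-1}$ and $x^{-1}*x$ are domain and range. But for the upper bound you first propose the implication ``embeds into the $\{\comp,\KL,\KR\}$-subreduct of \emph{some} unary semigroup $\Rightarrow$ representable by binary relations,'' then correctly observe that no translation-style representation of an arbitrary $(T,*,{}')$ delivers this, and finally retreat to ``re-run the combinatorial core \dots\ showing that associativity together with the two defining equations force a solution.'' That last sentence is not a proof step; it is precisely the theorem's content restated, and the idea that discharges it never appears in your write-up.

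The missing idea is semigroup-theoretic, not representation-theoretic. From $\KL(x)=x*x'$ and $\KR(x)=x'*x$ alone one gets $\KL(x)\leq_{\mathscr{R}}x$ and $\KR(x)\leq_{\mathscr{L}}x$ in $T$ for every $x$, with no hypotheses whatsoever on $'$. Inside the reduction structure $\E_0(\P)$ one also has $e_{ii}\comp a_{ij}=a_{ij}=a_{ij}\comp e_{jj}$ together with $\KL(a_{ij})=e_{ii}$ and $\KR(a_{ij})=e_{jj}$, so each $a_{ij}$ is $\mathscr{R}$-related to $e_{ii}$ and $\mathscr{L}$-related to $e_{jj}$ in $T$; hence for fixed $i,j$ the whole set $\{a_{ij}\mid a\in P\}$ lies in a single $\mathscr{H}$-class of $T$, the homotopy $(a\mapsto a_{0,1},\,b\mapsto b_{1,2},\,c\mapsto c_{0,2})$ is an $\mathscr{H}$-embedding, and Sapir's lemma (Lemma~\ref{lem:sap}) then yields a [finite] group extension of $\P$. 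This is a direct reduction from the square partial group [finite] embedding problem; it needs neither Theorem~\ref{thm:mainkernel} (which in this paper is \emph{derived from} the present theorem, so your reduction direction would be circular as organised here) nor any claim that an arbitrary $T$ in $\mathcal{K}$ acts by binary relations --- a claim that is not available, since $'$ satisfies no laws in general. Without the $\mathscr{H}$-class argument, or an equivalent substitute, your third paragraph does not close, and the theorem is not proved.
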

Theorem~\ref{thm:mainbiunary} captures the Gould and Kambites result as the base case when~$\mathcal{K}$ is the class of inverse semigroups.  But it also captures the $\{\KL,\KR, \comp\}$ case of Theorem~\ref{thm:mainkernel}, when $\mathcal{K}$ is the class of $\{{}^\conv,\comp\}$ algebras of binary relations.  In general though, the result requires no properties at all of the operation $'$ in $\mathcal{K}$, only that $\mathcal{K}$ contain the class of inverse semigroups.

If we restrict to a finite lattice of subsignatures, such as all  $2^9$ subsets of $\{0, 1, -,\allowbreak +, \cdot, \leq, 1', {}^\conv, \comp \}$ ordered by inclusion, each non-empty set of signatures must include a minimal one, by finiteness.  However, if we generalise to allow all signatures whose operations are definable by relation algebra terms, we have an infinite lattice of signatures, ordered by definability.
As our third and final  contribution we apply Theorem \ref{thm:mainbiunary} 
and   provide a signature with UDR and UDFR which is above no  minimal signature with either UDR or UFDR --- we show that there is an infinite sequence of signatures, each term-definable in its predecessor, and each with the UDR and UDFR property, but for which the limit is simply the signature $\{\comp\}$, which has DR and FDR.  

Let $\KLn{n}(a)=\stackrel{n\, \mbox{\tiny times}}{\overbrace{\KL(a)\comp \ldots\comp \KL(a)}},\; \KRn{n}(a)=\stackrel{n\, \mbox{\tiny times}}{\overbrace{\KR(a)\comp \ldots\comp \KR(a)}}$.
The desired sequence can be taken as $\big(\{\comp,\KLn{2^n},\KRn{2^n}\}\big)_{n\in\mathbb{N}}$ in the following theorem.
\begin{restatable}{thm}{thmchain}\label{thm:chain}
The signatures $\{\KLn{n},\KRn{n},\comp\}$ have the UDR and UDFR property, but the only operations on binary relations that are expressible as term functions in $\{\KLn{n},\KRn{n},\comp\}$ for arbitrarily large $n$ are terms expressible in $\{\comp\}$.    The signature~$\set{\comp}$ has DR and DFR.
\end{restatable}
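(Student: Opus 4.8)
The statement bundles three independent claims, which I would prove separately.

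For the decidability of $\set{\comp}$, a finite $\set{\comp}$-structure is representable as binary relations precisely when $\comp$ is associative, i.e.\ when it is a semigroup. Associativity of a finite operation table is checkable in polynomial time, and the right-regular (Cayley) action of a semigroup $S$ on $S^1$ realises it faithfully as partial functions on a finite set; hence representability and finite representability coincide, are witnessed over a finite set, and are decidable, giving both DR and DFR.

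For the UDR and UDFR of $\set{\KLn{n},\KRn{n},\comp}$ I would reduce from Theorem~\ref{thm:mainbiunary}, exploiting that it imposes \emph{no} law on the unary operation ${}'$. Fixing $n$, define on every full algebra of binary relations $\Rel(X)=(\mathcal P(X\times X),{}^\conv,\comp)$ the unary operation $r'=r^\conv\comp(r\comp r^\conv)^{n-1}$ (the exponent denoting $(n-1)$-fold composition), and let $\mathcal K_n$ be the closure under isomorphism and subalgebras of the class of unary semigroups $(\Rel(X),\comp,{}')$. A direct computation gives $x*x'=(x\comp x^\conv)^n=\KLn{n}(x)$ and $x'*x=(x^\conv\comp x)^n=\KRn{n}(x)$, so the $\set{\comp,\KL,\KR}$-subreduct of any member of $\mathcal K_n$ is exactly a $\set{\KLn{n},\KRn{n},\comp}$-subreduct of a binary-relation algebra. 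Thus embeddability of a finite bi-unary semigroup into a subreduct of a $\mathcal K_n$-member is literally $\set{\KLn{n},\KRn{n},\comp}$-representability as binary relations, with the finite case matching because a finite family of binary relations lies in $\Rel(X)$ for a finite $X$. It remains to note that $\mathcal K_n$ contains the variety of inverse semigroups: on the symmetric inverse monoid $I_X$ of injective partial functions one has $r^\conv=r^{-1}$ and $r\comp r^\conv$ the domain partial identity, which is idempotent, whence $r'=r^{-1}$; so $(I_X,\comp,{}')=(I_X,\comp,{}^{-1})$ is a subalgebra of $(\Rel(X),\comp,{}')$ and every inverse semigroup embeds into one of these. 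Theorem~\ref{thm:mainbiunary} then yields UDR and UDFR for each $\set{\KLn{n},\KRn{n},\comp}$; the case $n=1$ recovers the $\set{\KL,\KR,\comp}$ instance of Theorem~\ref{thm:mainkernel}.

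For the limit claim I would show that any term genuinely involving $\KLn{n}$ or $\KRn{n}$ computes an operation whose dependence on $n$ cannot be cancelled. Restrict to the \emph{shift-invariant} binary relations on $\mathbb{Z}$, those $r$ with $(i,j)\in r\iff j-i\in\Delta(r)$ for a difference set $\Delta(r)\subseteq\mathbb{Z}$. This class is closed under the signature, with $\Delta(r\comp s)=\Delta(r)+\Delta(s)$ and $\Delta(r^\conv)=-\Delta(r)$ (Minkowski arithmetic), so $\Delta(\KLn{n}(r))=\Delta(\KRn{n}(r))=n\cdot(\Delta(r)-\Delta(r))$. Feeding each variable an interval difference set $[0,W]$ keeps every intermediate difference set an interval, and the width then transforms linearly: $\comp$ adds widths, while each application of $\KLn{n}$ or $\KRn{n}$ multiplies the enclosed width by $2n$. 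Consequently the output width is $\sum_i c_i W_i$ with nonnegative integer coefficients in which a variable occurrence lying beneath at least one kernel operation contributes a factor $2n$; because all contributions are nonnegative there is no cancellation, so a term using any kernel operation has output-width growing without bound in $n$. If an operation $f$ on binary relations were expressible in $\set{\KLn{n},\KRn{n},\comp}$ for arbitrarily large $n$, its fixed output-width function could not match these growing functions for infinitely many $n$ unless, for all large $n$, the expressing term omitted the kernel operations altogether; such a term is a product of variables, so $f\in\Clo(\comp)$.

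The decisive and slightly unexpected step is the bespoke choice of ${}'$ in the second part, turning $x*x'$ and $x'*x$ into the $n$-fold kernels and thereby routing every $n$ through the single result Theorem~\ref{thm:mainbiunary}; once this is in place the reduction is immediate. I expect the genuine technical obstacle to lie in the limit claim: finding an $n$-invariant that survives arbitrary nesting of $\comp$ with the kernel operations. The width valuation on shift-invariant interval inputs supplies exactly such an invariant, but care is required to verify that interval difference sets remain gapless under all operations (so the width scales precisely by $2n$ at each kernel application) and that degenerate zero-width inputs do not mask the $2n$ factor; both are handled by assigning all variables a common positive width.
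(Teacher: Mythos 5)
Your proof is correct in substance but takes a genuinely different route from the paper's in both non-trivial parts. For the undecidability claim, the paper does not invoke Theorem~\ref{thm:mainbiunary} as a black box; it re-runs that proof with $\KLn{n},\KRn{n}$ in place of $\KL,\KR$, observing that $\KLn{n}(x)=(x*x')^n$ and $\KRn{n}(x)=(x'*x)^n$ are still unary-semigroup terms satisfying $\KLn{n}(x)\leq_{\mathscr{R}}x$ and $\KRn{n}(x)\leq_{\mathscr{L}}x$, so the $\mathscr{H}$-embedding argument goes through. Your alternative --- equipping $\Rel(X)$ with the bespoke operation $r'=r^\conv\comp(r\comp r^\conv)^{n-1}$ so that $x*x'=\KLn{n}(x)$ and $x'*x=\KRn{n}(x)$, and checking that the resulting class $\mathcal K_n$ still contains the inverse semigroups --- is a clean way to exploit the fact that Theorem~\ref{thm:mainbiunary} imposes no law on ${}'$, and arguably makes the dependence on that theorem more transparent. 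For the limit claim the divergence is sharper: the paper argues syntactically, citing Schein's result that the free involuted semigroup is representable, so every $\{\KLn{n},\KRn{n},\comp\}$-term has a unique reduced $\{{}^\conv,\comp\}$-word as normal form, and any term using a kernel operation has normal form of length at least $2n$; your semantic ``width'' invariant on shift-invariant relations over $\mathbb{Z}$ (Minkowski arithmetic of interval difference sets, each kernel application scaling width by $2n$) achieves the same lower bound without appealing to freeness, and is more elementary and self-contained. The Cayley argument for DR/DFR of $\{\comp\}$ is standard and the paper merely asserts it.

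One point needs repair. Your justification of the finite case --- ``a finite family of binary relations lies in $\Rel(X)$ for a finite $X$'' --- is not valid as stated: a finite $\{\comp,{}'\}$-subalgebra of $\Rel(X)$ for infinite $X$ does not obviously restrict to a finite base while preserving the operations (failure of exactly this is why the FRP is a nontrivial property). The equivalence between ``finitely representable'' and ``embeds into the subreduct of a finite member of $\mathcal K_n$'' does hold, but only via the sandwich through the group construction: an embedding into a finite member of $\mathcal K_n$ gives, by Theorem~\ref{thm:mainbiunary}, an embedding of $\P$ into a finite group $G$, and then the explicit map $\theta$ of Definition~\ref{def:bm} represents $\E_0(\P)$ over the finite base $3\times G$ (respecting $\KLn{n},\KRn{n}$ since all elements are injections, so the kernels are idempotent and equal to $\DD,\RR$). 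With that one-line fix the reduction is complete.
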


\section{Related work}

The signature $\{\Rightarrow,\comp\}$ with UDR and UDFR was considered by Lewis-Smith and \v{S}emrl~\cite{L-SS} and is minimal if $\Rightarrow$ is allowed as a standard Boolean connective  within the relation algebra signature $\set{0, 1, -, +, \cdots, 1', {}^\conv, \comp }$.   
The \emph{kernel}  $\KL$ and \emph{cokernel} $\KR$ were recently considered by East and Gray~\cite[\S3]{easgra}.  
  The  operation $\join$ was introduced in Bogaerts et al.~\cite{BtCMvdB} where it is noted as equivalent to Lawson's orthogonal join partial operation~\cite{law} on Boolean inverse monoids.

In the context of representability by binary relations, the partial group method was introduced in \cite{hirjac}, but as explained there, it has its roots in a range of semigroup theory applications which in turn are built from the earlier work of the 1950s work of Evans \cite{eva} on the uniform word problem in universal algebra and the 1940s theory developed by Albert~\cite{alb} for Latin Squares and quasigroups.
A flurry of semigroup theoretic developments occurred in the 1990s, after Kublanovsky discovered how to usefully encode Evan's partial group embedding problem into many semigroup-theoretic problems via Albert's homotopy embeddings.  This insight was incorporated into Hall, Kublanovsky, Margolis, Sapir and Trotter~\cite{HKMST}, which concerns the embeddability of semigroups into completely 0-simple semigroups and other related classes, but the work of Sapir~\cite{sap:H} (which predates \cite{HKMST} due to publication delays),  Jackson~\cite{jac:Hembed}, Kublanovsky and Sapir~\cite{kubsap} concerns embeddability in terms of various semigroup-theoretic structural relations, while later Sapir~\cite{sap:amalgam} and Jackson~\cite{jac:amalgam} applied the method to the show undecidability of embeddability of semigroup and ring amalgams.  Each of these applications, as well as subsequent applications to representability as binary relations (such as the present article), uses variants of essentially the same underlying construction from \cite{HKMST}, though finding the right implementation of the right variant can be remarkably subtle and small changes can change the outcome dramatically: this is already clear from the~13 equivalent conditions in the original \cite{HKMST}, and reinforced further by the eventual discovery that condition 13 collapses completely, even if the remaining 12 are indeed equivalent~\cite{jacvol}.

  
\section{Operations}\label{sec:operations}
Our focus is on signatures weaker than that of relation algebra, however we also consider one constant symbol $\uu$ which is not definable by any relation algebra term.  When $\uu$ is included in the signature, we require that $\uu^\theta=X\times X$, the universal relation, in order for $\theta$ to qualify as a representation over the base $X$.
In this article, our primary definition of $-$ is with respect to this universal relation, in keeping with the idea of representing into the power set $\wp(X\times X)$ for some set $X$.  This ``universal complementation'' should be compared to a localised complementation, where $(-a)^\theta$ is the complement of $a^\theta$ within the union of all other relations that are present; see \cite{neu} for further discussion\footnote{The following problem remains open.
\begin{problem}
Is \up(finite\up) representability  undecidable for complemented semigroups under relativised complement?
\end{problem}
} .  When $1$ is included in the signature $1^\theta$ plays the weaker role of a top (not necessarily universal) binary relation.  
Of course $1$ will coincide with $\uu$ if both are present, because $\uu$ contains all binary relations over $X$, and $1$ is intended to be the top element with respect to~$\leq$.  
But when $\uu$ is not present, then the notation allows $1$ to be interpreted as a top element that may not be the universal relation.  
We mention that there are different philosophical stances on the  semantics of $-$ and $1$.  
In Tarski's original \cite{tar} the element $1$ is intended to be the universal relation, as seems correct in the context of representing within the set of all binary relations on a set $X$.  
In Schein's surveys \cite{sch70} and \cite{sch91}, the element $1$ is typically taken to mean the universal relation, though in \cite{sch91} the symbol  $\uu$ is used.   
However, the top element of a direct product of non-trivial algebras will not be universal. The definition of $1$ is now often taken to mean only the properties that can be forced upon it by the remaining operations.  
In the full Tarski signature this makes $1$ an equivalence relation that contains all other relations, because $1^\conv =1\comp 1=1\geq 1'$ and $x\leq 1$ for all $x$.

In addition, we consider various constants/functions $f$ definable by relation algebra terms, and express the definition by a three variable formula with up to two variables free in a first-order language with binary predicates and equality\footnote{Each three variable formula in this language is equivalent to a relation algebra term \cite[Section 3.9]{TG87} or \cite[Theorem 3.32]{HH:book}}.  
When the RA-term defining the new function $f$ is absent from the signature under consideration, a representation is still required to interpret  $f$ according to the 3 variable definition, thus $\DD(a)^\theta=\set{(x, x)\mid \exists z (x, z)\in a^\theta}$ for any representation $\theta$ where $\DD$ is included in the signature, even if $1'$ or $\cdot$ are absent.


\begin{equation*}
\begin{array}{l|lrl}
f&\mbox{RA-term def.}&\multicolumn{2}{c}{\mbox{3 variable def.\ in reps.}}\\
\hline
\cdot &a\cdot b=-(-a+-b)\hspace{0.8cm}&(a\cdot b)(x, y)&\!\!\!\!\iff (a(x, y)\wedge b(x, y))   \\
0'&0'=-1'& 0'(x, y)&\!\!\!\!\iff x\neq y\\
\Rightarrow& (a\Rightarrow b)= b+-a&(a\Rightarrow b)(x, y)&\!\!\!\!\iff (b(x, y)\vee \neg a(x, y))\\
\DD&\DD(a)=1'\cdot a\comp 1&\DD(a)(x, y)&\!\!\!\!\iff (x=y\wedge\exists z a(x, z))\\
\RR&\RR(a)=1'\cdot (1\comp a)&\RR(a)(x, y)&\!\!\!\!\iff (x=y\wedge\exists z a(z, y))\\
\Ad&\Ad(a)=1'\cdot -\DD(a)&\Ad(a)(x, y)&\!\!\!\!\iff (x=y\wedge\neg\exists z a(x,z))\\
\Ar&\Ar(a)=1'\cdot -\RR(a)&\Ar(a)(x, y)&\!\!\!\!\iff(x=y\wedge\neg\exists z a(z, y))\\
\KL=\KLn{1}&\KL(a)=a\comp a^\conv&\KL(a)(x, y)&\!\!\!\!\iff \exists z(a(x, z)\wedge a(y, z))\\
\KR=\KRn{1}&\KR(a)=a^\conv\comp a&\KR(a)(x, y)&\!\!\!\!\iff\exists z(a(z, x)\wedge a(z, y))\\
\KLn{n+1}&\multicolumn{2}{l}{\KLn{n+1}(a)=\KL(a)\comp\KLn{n}(a)}&\\
&\multicolumn{3}{r}{\KLn{n+1}(a)(x, y)\iff\exists z(\KL(a)(x, z)\wedge\KLn{n}(z, y))}\\
\KRn{n+1}&\multicolumn{2}{l}{\KRn{n+1}(a)=KR(a)\comp \KRn{n}(a)}& \\
& \multicolumn{3}{r}{\KRn{n+1}(a)(x, y) \iff\exists z(\KR(a)(x, z)\wedge\KRn{n}(z, y))}\\
\join & \multicolumn{2}{l}{(a\join b)= a+ \Ad(a)\comp b\comp \Ar(a)}&\\
&\multicolumn{3}{r}{\hspace{0.8cm}(a\join b)(x, y)\iff (a(x, y)\vee (b(x, y)\wedge\forall z(\neg a(x, z)\wedge\neg a(z, y)))}
\end{array}
\end{equation*}

We also consider new relation symbols, definable by relation algebra equations, and give the corresponding 3 variable requirement for representations. 
\[
\begin{array}{l|rll}
\mbox{Relation}&\mbox{Atom}& \mbox{RA term def.}&\mbox{3 var.\ semantic def.}\\
\hline
\leq &(a\leq b)&\!\!\!\!\!\!\iff a+b=b&   \forall x, y (a(x, y)\rightarrow b(x, y))\\
\II&\II(a) &\!\!\!\!\!\!\iff \!\!\!\!\begin{array}{l}\!\!( a\comp a^\conv\leq 1' \\\wedge a^\conv\comp a\leq 1')\end{array}\!\! & \begin{array}{ll}
\!\! \forall x, y, z  & \!\!\!\!\!((a(x, y)\wedge a(x, z))\rightarrow(y=z))\\ &\!\!\!\wedge((a(x, z)\wedge a(y, z))\rightarrow x=y)\end{array}\\
\equiv_{\DD}&(a\equiv_{\DD} b)&\!\!\!\!\!\!\iff \DD(a)=\DD(b)& \forall x (\exists y a(x, y)\leftrightarrow \exists y b(x, y))\\
\equiv_{\RR} &(a\equiv_{\RR} b)&\!\!\!\!\!\!\iff \RR(a)=\RR(b)&\forall y (\exists x a(x, y)\leftrightarrow\exists x b(x, y))
\end{array}
\]

When we consider signatures without converse, identity and order,  the term definition $a\comp a^\conv\leq 1'\wedge a^\conv\comp a\leq 1'$ of $\II(a)$ is not available.  
The central trick in~\cite{hirjac} was  $\theta_2(a)$ below for signatures which may omit converse and  include diversity,  meet and composition.  
When $\leq$ and $-$ are present, then the property $(a\comp 0')\cdot a=0$ can be expressed as $-(a\comp 0')\geq a$, and similarly for $(0'\comp a)\cdot a$ as $-(0'\comp a)\geq a$, which is used by Neuzerling~\cite{neu}, and further embellished in $\theta_3(a)$ avoiding reference to any order-theoretic relations or operations,

\begin{lem}\label{lem:inj}
Consider the following formulas and their signatures \up($n\geq 1$\up):
\[
\begin{array}{l ll}
\II_i(a)&\mbox{definition}&\Sigma_i\\
\hline
\II_0(a)& (a\comp a^\conv\leq 1'\wedge a^\conv\comp a\leq 1')&\set{1',{}^\conv,\comp , \leq}\\
\label{inj1}\II_1(a)& (\KLn{n}(a)\leq 1'\wedge\KRn{n}(a)\leq 1')&\set{1',\KLn{n}(a),\KRn{n}(a), \leq}\\
\II_2(a)&((a\comp 0')\cdot a=0\wedge (0'\comp a)\cdot a=0)&\set{0,0',\cdot,\comp }\\
\II_3(a)&((-(a\comp 0'))\comp\uu\comp(-(0'\comp a))=\uu)&\set{0',\uu,-,\comp }
\end{array}
\]
For $i=0,1,2,3$, if $\c A$ is a $\Sigma_i$-structure, $a\in\c A$ and $\theta$ is a $\Sigma_i$-representation of $\c A$ over $X$ then $a^\theta$ is an injection iff $\c A\models \II_i(a)$.
\end{lem}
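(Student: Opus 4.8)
The plan is to push the statement through the representation and reduce it to four concrete computations. Because $\theta$ is a $\Sigma_i$-representation it is an isomorphism onto a subalgebra of the full algebra of binary relations on $X$, and it interprets each symbol of $\Sigma_i$ by its standard set-theoretic meaning: $(1')^\theta=\set{(x,x)\mid x\in X}$, $(0')^\theta=\set{(x,y)\mid x\neq y}$, $\uu^\theta=X\times X$, $\leq$ as $\subseteq$, and $-$ as complementation within $X\times X$. Since each $\II_i$ is a conjunction of equations/containments, which isomorphisms preserve and reflect, $\c A\models\II_i(a)$ holds iff the concrete reading of $\II_i$ is true of the single relation $r:=a^\theta$ on $X$. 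It therefore suffices to prove, for every binary relation $r$ on a nonempty $X$ (the case $X=\emptyset$ being trivial), that $\II_i(r)$ holds iff $r$ is an injection, i.e. iff $\forall x,y,z\,(r(x,z)\wedge r(y,z)\to x=y)$ and $\forall x,y,z\,(r(x,y)\wedge r(x,z)\to y=z)$. For $i=0$ this is immediate from the table: $(a\comp a^\conv)(x,y)\iff\exists z\,(r(x,z)\wedge r(y,z))$, so $a\comp a^\conv\leq1'$ is exactly injectivity, and dually $a^\conv\comp a\leq1'$ is functionality. For $i=2$ it is equally direct: $((a\comp 0')\cdot a)(x,y)\iff r(x,y)\wedge\exists z\,(r(x,z)\wedge z\neq y)$, so its vanishing is functionality, and $(0'\comp a)\cdot a=0$ is injectivity.

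For $i=1$ the point is that the powers are harmless. Put $e:=\KL(a)^\theta$; then $e$ is symmetric, $e(x,x)$ holds exactly when $x\in\dom r$, and $e(x,y)$ forces $x,y\in\dom r$. If $e\subseteq(1')^\theta$ then $e$ equals the partial diagonal $\set{(x,x)\mid x\in\dom r}$ (being reflexive on $\dom r$ and contained in the diagonal), which is idempotent under $\comp$, so $\KLn{n}(a)^\theta=e^{n}=e\subseteq(1')^\theta$ for every $n\geq1$. Conversely, if $e\not\subseteq(1')^\theta$ pick $x\neq y$ with $e(x,y)$; then $x\in\dom r$, so $e(x,x)$, and the length-$n$ walk $x,x,\dots,x,y$ witnesses $\KLn{n}(a)^\theta(x,y)$ with $x\neq y$. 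Hence $\KLn{n}(a)\leq1'\iff\KL(a)\leq1'$, which is injectivity; dually $\KRn{n}(a)\leq1'$ is functionality, so $\II_1(r)$ again characterises injections.

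For $i=3$ the core is one calculation. Set $P:=-(a\comp 0')$ and $Q:=-(0'\comp a)$, so $P(x,y)\iff\forall z\,(r(x,z)\to z=y)$ and $Q(x,y)\iff\forall z\,(r(z,y)\to z=x)$; thus $\exists v\,P(x,v)$ says $x$ has at most one $r$-image and $\exists w\,Q(w,y)$ says $y$ has at most one $r$-preimage, where nonemptiness of $X$ supplies a witness for an isolated $x$ or $y$. Composing through $\uu$ erases the middle coordinate, $(P\comp\uu\comp Q)(x,y)\iff(\exists v\,P(x,v))\wedge(\exists w\,Q(w,y))$, so the equation $P\comp\uu\comp Q=\uu$ asserts exactly $\forall x\,\exists v\,P(x,v)$ together with $\forall y\,\exists w\,Q(w,y)$, i.e. functionality and injectivity of $r$.

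I expect the two real obstacles to be cases $i=1$ and $i=3$. In $i=1$ one must show that taking $n$-th powers neither creates nor removes off-diagonal pairs, for which the decisive facts are the symmetry and domain-reflexivity of $\KL(a)^\theta$ and the idempotence of a sub-diagonal relation. In $i=3$ the only delicacy is the bookkeeping in the $P\comp\uu\comp Q$ computation and the use of $X\neq\emptyset$ to read $\exists v\,P(x,v)$ as ``$x$ has at most one image''; the remaining cases are routine unwindings of the table semantics.
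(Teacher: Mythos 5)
Your proof is correct and follows essentially the same route as the paper's: reduce each $\II_i$, via the concrete semantics of the representation, to a pointwise verification that the formula characterises injective partial functions, with the same key observations (the power $\KLn{n}$ collapses to $\KL$ because a sub-diagonal kernel is idempotent while an off-diagonal pair propagates through all powers, and $P\comp\uu\comp Q=\uu$ decouples into totality of the domains of $P$ and $Q$). The only differences are cosmetic: the paper phrases $i=3$ in terms of domains of relations rather than your first-order unwinding, and states the $i=1$ power argument more tersely.
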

\begin{proof}
Suppose $a^\theta$ is an injection.    In $\Rel(X)$ we have
\[ a^\theta\comp  (a^\theta)^{-1}=\DD(a^\theta)=\KLn{n}(a^\theta),\; (a^\theta)^{-1}\comp a^\theta=\RR(a^\theta)=\KRn{n}(a^\theta).\]
For $i=0,1$, since $\DD(a^\theta), \RR(a^\theta), \KLn{n}(a^\theta), \KRn{n}(a^\theta) \subseteq Id_X=\set{(x, x)\mid x\in X}$, it follows that  $\c A\models\II_0(a)\wedge\II_1(a)$. For $i=2$, since $a^\theta$ is a function we have $a\comp 0'\;\cdot\; a =0$ and since it is an injection we have $0'\comp a\;\cdot\; a=0$, so $\c A\models\II_2(a)$.  Finally for $i=3$, as $(0')^\theta$ is the $\neq$ relation, and $a^\theta$ is a  partial function then $-(a\comp 0')^\theta$ agrees with $a^\theta$ on the domain of $a^\theta$ and relates all points outside of the domain of $a^\theta$ to all of $X$.  
In particular, $-(a\comp 0')^\theta$ has domain $X$ so that $(-(a\comp 0')\comp \uu)^\theta =\uu^\theta$.  Likewise, since $a$ is injective,  $(\uu\comp-(0'\comp a))^\theta =\uu^\theta=X\times X$,  hence $\c A\models\II_3(a)$.

Conversely,  now assume that $a^\theta$ is not a function: so there are $x, y,z$ with $y\neq z$,\/ $(x,y)\in a^\theta$ and $(x,z)\in a^\theta$.    Then $(y, z)\in (a^\theta)^{-1}\comp  a^\theta\setminus (1')^\theta$ so $\c A \not\models\II_0(a)$, \/ $(y, z)\in\KRn{n}(a^\theta)\setminus (1')^\theta$ so $\c A\not\models\II_1(a)$.  Also, $(x, y)\in a^\theta\cap (a\comp 0')^\theta$ so $\c A\not\models\II_2(a)$.  For $i=3$,  for all $w$  either $w\neq y$ or $w\neq z$ so we have $(x,w)\in (a\comp 0')^\theta$, so that $x$ is not in the domain of $-(a\comp 0')^\theta$.  
Then $((-(a\comp 0'))\comp \uu)^\theta$ has strictly smaller domain than $\uu^\theta$, as does $(-(a\comp 0'))\comp \uu\comp (-(0'\comp a))^\theta$, so $\c A\not\models\II_3(a)$.  
Dually, if $a^\theta$ is not injective then  $\II_i(a)$ fails. 
\end{proof}

\section{The partial group embedding method}
An instance $(P, *)$ of the \emph{partial group [finite] embedding problem} is a partial binary function $*$ over a finite set $P$, it is a yes instance of $(P, *)$ extends to a [finite] group else it is a no-instance.  Both the partial group embedding problem  and the partial group finite embedding problem are known to be undecidable \cite{eva, slob}.   In the following, we restrict instances of these problems to \emph{square partial groups} $(P, *)$, where 
\begin{itemize}
\item there are no explicit violations of associativity,
\item there is an element $e\in P$ such that $e*a=a*e=a$ for all $a\in P$,
\item there is a subset $\sqrt{P}\subseteq P$ containing $e$ such that $*$ is a total function from $\sqrt{P}^2$ to $P$ and is undefined for products involving $P\backslash \sqrt{P}$ except for those of the form $e*a=a*e=a$,
\item for each $a\in\sqrt{P}$ there is a unique $a'$ such that $a*a'=a'*a=e$.  
\end{itemize}
and both problems remain undecidable \cite[Lemma~2.4]{hirjac}. Notationally we set $\{e\}=\sqrt{P}^{(0)}$, $\sqrt{P}^{(1)}:=\sqrt{P}$ and $\sqrt{P}^{(2)}:=P$. All the undecidability results we establish here are based on reductions from  the square partial group [finite] embedding problem.

A detailed motivation behind the underlying construction, in the context of representability is given in \cite{HHJ}.  
In the present article we take a slightly different approach to previous efforts in representability applications, and use the particular formulation in Sapir~\cite{sap:H}.
We recall some relevant semigroup-theoretic notions. 

\medskip
Let $S=(S, \comp)$ be any semigroup and define the relation $\leq_\mathscr{L}$ by $a\leq_\mathscr{L}b$ if $a=b$ or there exist $x$ such that $x\comp b=a$.  In other words $a\leq_\mathscr{L}b$ if $a$ lies in the principal left ideal of $b$.  
Dually we define $\leq_\mathscr{R}$ in terms of right multiplication.  Associativity easily shows that these are preorders: reflexive and transitive.  
The binary relations~$\mathscr{L}$ and~$\mathscr{R}$ are then defined as the equivalence relations determined by $\leq_\mathscr{L}$ and $\leq_\mathscr{R}$ respectively: $a\mathrel{\mathscr{L}}b\iff (a\leq_\mathscr{L}b \wedge b\leq_\mathscr{L}a)$  and $a_\mathscr{R} b \iff(a\leq_\mathscr{R} b\wedge b\leq_\mathscr{R} a)$.
The relation $\mathscr{H}$ is defined by $\mathscr{L}\cap \mathscr{R}$.  
These and some other relations are usually referred to as \emph{Green's relations}, see any semigroup theory text, such as \cite{clipre}.  
The article \cite{sap:H} concerns the problem of when a binary relation $R\subseteq S\times S$ on a finite semigroup ${\bf S}$ might sit within the $\mathscr{H}$-relation in some larger embedding semigroup ${\bf T}\geq {\bf S}$.  This problem turns out to be undecidable, even in the case where $R$ consists of a total relation on a subset $A\subseteq S$~\cite{jac:Hembed}.

A \emph{homotopy} from a square partial group $(P, *)$ into a semigroup ${\bf S}$ is a triple of maps $(\alpha,\beta,\gamma)$ where $\alpha, \beta:\sqrt{P}\rightarrow{\bf S}, \;\gamma:P\rightarrow {\bf S}$ such that  $\alpha(a)\comp \beta(b)=\gamma(a\comp b)$,  and is called a \emph{homotopy embedding} if $\gamma$ is injective.  
An \emph{$\mathscr{H}$-embedding} of $(P, *)$ into a semigroup ${\bf S}$ is a homotopy embedding in which each of $\alpha(\sqrt{P})$, $\beta(\sqrt{P})$, $\gamma(P)$ lies within an equivalence class of Green's $\mathscr{H}$-relation on ${\bf S}$.  
The argument in \cite[Proposition 5.1]{hirjac} is loosely equivalent to the proof of the following lemma due to Mark Sapir, but we have found this particular formulation in terms of $\mathscr{H}$-embedding provides the most streamlined  presentation of the results here.
\begin{lem} \label{lem:sap} \up(M.~Sapir \cite{sap:H}\up)
Let $(P, *)$ be a square partial group.  There is an $\mathscr{H}$-embedding of $(P, *)$ into a \up[finite\up] semigroup if and only if $(P, *)$ embeds into a \up[finite\up] group.
\end{lem}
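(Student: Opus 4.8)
The plan is to prove the two directions separately, with the forward (easy) direction establishing that an $\mathscr{H}$-embedding into a group is trivial, and the reverse direction doing the real work of converting an abstract $\mathscr{H}$-embedding into a group embedding. For the forward direction, suppose $(P,*)$ embeds into a (finite) group $G$. A group is a single $\mathscr{H}$-class, since every element is invertible, so the left and right ideals are all of $G$ and hence $\mathscr{L}=\mathscr{R}=\mathscr{H}=G\times G$. I would then define the homotopy $(\alpha,\beta,\gamma)$ by taking $\alpha=\beta=\gamma$ to be the embedding map restricted appropriately (so $\alpha,\beta$ on $\sqrt P$ and $\gamma$ on $P$), and check that $\alpha(a)\comp\beta(b)=\gamma(a*b)$ follows directly from the embedding being a homomorphism on the domain of $*$. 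Since the embedding is injective, $\gamma$ is injective, giving a homotopy embedding; and since all images land in the unique $\mathscr{H}$-class of $G$, it is an $\mathscr{H}$-embedding.

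For the reverse direction I would start from an $\mathscr{H}$-embedding $(\alpha,\beta,\gamma)$ of $(P,*)$ into a semigroup ${\bf S}$ (finite if the finite version is sought) and extract a group. The key structural fact to invoke is that within any semigroup, an $\mathscr{H}$-class $H$ that contains an idempotent is a maximal subgroup (Green's theorem; see \cite{clipre}). The strategy is to locate such an $\mathscr{H}$-class. Using the identity element $e\in\sqrt P$ with $e*a=a*e=a$, the homotopy equation gives $\alpha(e)\comp\beta(a)=\gamma(a)$ and $\alpha(a)\comp\beta(e)=\gamma(a)$ for all $a\in\sqrt P$, and in particular $\alpha(e)\comp\beta(e)=\gamma(e)$. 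I would first argue that $\gamma(e)$ behaves as a local identity, then show that a suitable idempotent arises in the $\mathscr{H}$-class containing the image of the invertible elements $\sqrt P$. The inverses $a'$ guaranteed by the square partial group axioms (with $a*a'=a'*a=e$) translate under the homotopy into relations like $\alpha(a)\comp\beta(a')=\gamma(e)$, which I would use to show that the relevant elements of $\gamma(\sqrt P)$ are group-invertible within their common $\mathscr{H}$-class.

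The main obstacle, and the step requiring the most care, is reconstructing an honest multiplication on $\sqrt P$ that is \emph{total} and \emph{associative} so as to recover a group, since the homotopy only records $\alpha$-$\beta$ products and $*$ is only partially defined on $P$. My plan is to restrict attention to the $\mathscr{H}$-class $H$ of ${\bf S}$ guaranteed to contain $\gamma(\sqrt P)$ (and to contain an idempotent, hence to be a group by Green's theorem), and to define the group embedding of $\sqrt P$ via $\gamma$ composed with the left/right translations that convert the homotopy's ``off-diagonal'' $\alpha(a)\comp\beta(b)$ products into genuine products inside $H$. Concretely, I expect to translate $\gamma(a)$ by a fixed inverse of $\gamma(e)$ within $H$ so that the resulting map is a homomorphism on the totally-defined part $\sqrt P\times\sqrt P$; injectivity of this map follows from injectivity of $\gamma$. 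Finally, since $\sqrt P$ generates $P$ via the limited products $e*a$ and products within $\sqrt P$, and every element of $P$ appears as some $a*b$ with $a,b\in\sqrt P$ (using the square structure), the embedding of $\sqrt P$ into the group $H$ extends to an embedding of all of $(P,*)$, and finiteness of ${\bf S}$ yields finiteness of the witnessing group $H$.
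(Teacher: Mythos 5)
The paper does not prove this lemma itself --- it is quoted from Sapir \cite{sap:H} --- so there is no in-paper argument to compare against; I can only assess your proposal on its merits. Your forward direction is fine: a group is a single $\mathscr{H}$-class, and restricting the group embedding gives the required homotopy. The reverse direction, however, has a genuine gap at its central step. You claim the $\mathscr{H}$-class $H$ containing $\gamma(\sqrt P)$ is ``guaranteed to contain an idempotent, hence to be a group by Green's theorem,'' and you then invert $\gamma(e)$ inside $H$. Nothing in the hypotheses guarantees this, and it is false in exactly the semigroups this lemma is applied to in the paper: in $B_3(\G)$ of Definition~\ref{def:bm}, the image $\gamma(P)$ lands in the $\mathscr{H}$-class $\{g_{0,2}\mid g\in G\}$, all of whose elements square to $0$, so it contains no idempotent and is not a subgroup. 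Relatedly, $\gamma(e)=\alpha(e)\comp\beta(e)$ need not be idempotent even though $e$ is an identity of $\P$. So the translation-by-an-inverse-of-$\gamma(e)$ step cannot be carried out as written, and the whole reconstruction collapses.

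The standard repair is to replace the (possibly nonexistent) maximal subgroup by the Schützenberger group $\Gamma(H)$ of $H=H_{\gamma(e)}$, which acts simply transitively on $H$ by right translations whether or not $H$ contains an idempotent. Define $\phi(c)$ to be the unique element of $\Gamma(H)$ carrying $\gamma(e)$ to $\gamma(c)$; injectivity of $\gamma$ gives injectivity of $\phi$. For the homomorphism law, choose $t_b\in S^1$ with $\gamma(e)t_b=\gamma(b)$ (possible since $\gamma(e)\mathrel{\mathscr{R}}\gamma(b)$) and $s\in S^1$ with $s\,\alpha(e)=\alpha(a)$ (possible since $\alpha(e)\mathrel{\mathscr{L}}\alpha(a)$, using that $\alpha(\sqrt P)$ lies in one $\mathscr{H}$-class); then $\gamma(a)t_b=s\,\alpha(e)\comp\beta(e)t_b=s\,\gamma(b)=\alpha(a)\comp\beta(b)=\gamma(a*b)$, so $\phi(a)\phi(b)=\phi(a*b)$, and finiteness of the ambient semigroup gives $|\Gamma(H)|=|H|<\infty$. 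Note that this is where the hypotheses that $\alpha(\sqrt P)$ and $\beta(\sqrt P)$ each lie in a single $\mathscr{H}$-class are genuinely used; your sketch never invokes them, which is another sign that something essential is missing.
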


The next lemma reveals why we are interested in the combination $\{\II,\equiv_\DD,\equiv_\RR, \comp\}$: it enables application of Lemma \ref{lem:sap}.
\begin{lem}\label{lem:H}
Let $S$ be a signature that can express $\comp, \II$ and $\equiv_\DD$, let $\Rel(X)$ be the $S$-algebra of all binary relations over $X$.  
For all binary relations $r, s\in \Rel(X)$, if   $\II(r), \; \II(s)$ and $ r\equiv_\DD s$   then $\Rel(X) \models r \mathrel{\mathscr{R}}s$. Dually, if $S$ can express $\comp, \II$ and $\equiv_{\RR}$ and $r\equiv_\RR s$ then $\Rel(X)\models r\mathrel{\mathscr{L}}s$.
\end{lem}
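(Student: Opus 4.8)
The plan is to exhibit explicit multipliers witnessing the Green's $\mathscr{R}$-preorder in both directions, exploiting the fact that the converse of an injective partial function, although possibly absent from the signature $S$, is nonetheless an element of the underlying semigroup $(\Rel(X),\comp)$ and therefore a legitimate multiplier for the purpose of computing Green's relations. The key point is that $\mathscr{R}$ and $\mathscr{L}$ are properties of the semigroup reduct alone, whose elements are \emph{all} binary relations on $X$.

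First I would unpack the hypotheses. By Lemma~\ref{lem:inj} (case $i=0$), $\II(r)$ and $\II(s)$ say precisely that $r$ and $s$ are injective partial functions on $X$, so their converses $r^\conv, s^\conv$ are again injective partial functions; and $r\equiv_\DD s$ says $\DD(r)=\DD(s)$. I would then record two elementary equational facts. For any injective partial function $t$ one has $t\comp t^\conv=\DD(t)$, since injectivity is exactly what collapses the kernel relation $t\comp t^\conv$ to the partial identity on $\dom(t)$. And for any relation $u$ with $\DD(u)\subseteq\DD(t)$ one has $\DD(t)\comp u=u$, because pre-composing by the partial identity on a superset of the domain of $u$ changes nothing.

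Next, for the direction $r\leq_\mathscr{R}s$, which by definition asks for some $x$ with $s\comp x=r$, I would take $x:=s^\conv\comp r$, so that $s\comp x=(s\comp s^\conv)\comp r=\DD(s)\comp r=r$, the last step using $\DD(r)=\DD(s)$. Symmetrically, the witness $y:=r^\conv\comp s$ gives $r\comp y=(r\comp r^\conv)\comp s=\DD(r)\comp s=s$, so $s\leq_\mathscr{R}r$; combining the two yields $r\mathrel{\mathscr{R}}s$ in $\Rel(X)$. The dual statement is the mirror image: under $\II(r),\II(s)$ and $r\equiv_\RR s$ one uses $t^\conv\comp t=\RR(t)$ together with $u\comp\RR(t)=u$ whenever $\RR(u)\subseteq\RR(t)$, and the witnesses $x:=r\comp s^\conv$ and $y:=s\comp r^\conv$ give $x\comp s=r$ and $y\comp r=s$, establishing $r\mathrel{\mathscr{L}}s$.

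There is no deep obstacle; the computation is short once set up. The only thing that requires care is the bookkeeping of the composition convention $(a\comp b)(x,y)\iff\exists z\,(a(x,z)\wedge b(z,y))$, which is what pairs right multiplication (and hence $\leq_\mathscr{R}$, $\mathscr{R}$) with \emph{domains}, and therefore with $\equiv_\DD$, while left multiplication pairs with \emph{ranges} and $\equiv_\RR$; swapping these would break the argument. The conceptual subtlety worth flagging explicitly is that the witnessing multipliers involve converse, which need not lie in $S$, but this is harmless precisely because Green's relations live in the semigroup reduct $(\Rel(X),\comp)$.
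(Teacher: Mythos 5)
Your proof is correct and follows essentially the same route as the paper's: both arguments use Lemma~\ref{lem:inj} to see that $r$ and $s$ are injective partial functions whose converses exist in $\Rel(X)$ (even though ${}^\conv$ may be absent from $S$), observe that $r\comp r^\conv=\DD(r)=\DD(s)$, and then exhibit the explicit multipliers $r^\conv\comp s$ and $s^\conv\comp r$ to witness both directions of the $\mathscr{R}$-preorder, with the dual argument for $\mathscr{L}$. Your write-up is slightly more explicit about the auxiliary identities and about why converse is a legitimate multiplier in the semigroup reduct, but the substance is identical.
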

\begin{proof}
Since $\II(r)$ holds, $r$ is an injective function, with an inverse $(r)^\conv\in\Rel(X)$, and  $r\comp (r)^\conv$ is the identity on the domain of $r$.  If additionally $r\equiv_\DD s$ then domain of~$r$ coincides with the domain of $s$, so that $r\comp ((r^\conv)\comp s)=s$, hence  $r\leq_{\mathscr{R}} s$.  Similarly, $s\leq_{\mathscr{R}} r$, so $r{\mathscr{R}}s$.  Dually,  if  $r, s$ are injections and $r\equiv_\RR s$ then $r\mathrel{\mathscr{L}}s$.
\end{proof}

\section{Constructions}\label{sec:constructions}
Let $\P=(P, *)$ be a square partial group.  [The following constructions are  obtained as subreducts of the Boolean monoid $M(\P)$ of \cite{hirjac}, expanded to include kernel and cokernel in the signature.]    Consider the semigroup $( E(\P), \comp )$ where 
\begin{align*}
     E(\P)&=\set 0\cup \set{a_{i,j}\mid i\leq j<3,\; a\in\sqrt P^{j-i}}\\
    &=\set0\cup \set{e_{i,i}\mid i<3}\cup \set{a_{0,1}, b_{1,2}, c_{0, 2}\mid a, b\in\sqrt{P},\; c\in P}
\end{align*}
and $\comp$ is defined by
\begin{equation*}
a_{i, j}\comp b_{j, k} = (a*b)_{i,k}
\end{equation*}
for $i\leq j\leq k<3,\; a\in\sqrt P^{j-i}, \; b\in\sqrt P^{k-j}$, and all other compositions yield $0$.  We may also define $\cdot, \DD, \RR, \KL,\KR$ by
\begin{align*}
x\cdot y&=\left\{\begin{array}{ll}x&x=y\\0&x\neq y\end{array}\right.\\
    \DD(a_{i, j})=\KL(a_{i, j})&=e_{i, i}&\RR(a_{i, j})=\KR(a_{i, j})=e_{j, j}
\end{align*}
for $a\in\sqrt P^{j-i}$, and $\DD(0)=\RR(0)=\KL(0)=\KR(0)=0$. Let
\begin{equation}\label{eq:m0}
    \E_0(\P)=( E(\P), 0, \cdot, \DD, \RR, \KL, \KR, \comp).
\end{equation}

\begin{remark}
 The structure $\E_0(\P)$ can be used to prove undecidability for signatures $\set{\KL, \KR, \comp} \subseteq \tau \subseteq\set{0, \cdot, \DD, \RR, \KL, \KR, \comp}$.  
 \end{remark}
 \begin{remark}
 The triple of maps $(\alpha, \beta,\gamma)$ where
\[ \alpha(a)=a_{0,1},\; \beta(b)=b_{1,2},\;\gamma(c)=c_{0,2}\]
for $a, b\in\sqrt{P},\; c\in P$,  is a homotopy embedding  from $\P$  to $(  E(\P), \comp )$.  However, except in the trivial case, it will not be an $\mathscr{H}$-embedding --- for any distinct $a\neq b\in {\bf P}$ we have $a_{0, 1}\not\!\!\!\mathscr{L} b_{0, 1},\;a_{0, 1}\not\!\!\!\mathscr{R} b_{0, 1} $ as there are no solutions of $x\comp a_{0, 1}=b_{0, 1}$ or $a_{0,1}\comp y=b_{0,1}$ in $ E({\bf P})$, so $\alpha(\sqrt{P})$ is not contained in any $\mathscr{H}$-equivalence class.  In order to make $(\alpha, \beta, \gamma)$ into an $\mathscr{H}$-embedding, the codomain $ E({\bf P})$ must be properly extended to a semigroup which includes the required solutions $x, y$, among other things.
\end{remark}

Next, we expand the base and signature of $ \E_0(\P)$ to include negation.
For each $i\leq j<3,\; a\in\sqrt P^{j-i}$ let $\bar a_{i, j}, 1_{i, j}, 1_{j, i}$  be new elements.  Let
\[  E_1(\P)=\set{ a_{i, j}, \bar a_{i,j}\mid i\leq j<3,\; a\in \sqrt P^{j-i}}\cup\set{1_{i, j}\mid i, j<3}\]
and extend the definition of $\comp$ by

\begin{align*}
    \bar a_{i, j}\comp x_{j, k}&=1_{i, k}&i\leq j<3,\; k<3,\; a\in\sqrt P^{j-i}\\
    1_{i, j}\comp x_{j, k}=x_{i,j}\comp 1_{j,k} &=1_{i, k}& i, j, k<3,\; x_{i, j}\neq 0\\
    x_{i, j}\comp \bar a_{j,k}&= 1_{i,k}&j\leq k<3,\; i<3
\end{align*}
and let all remaining compositions over $ E_1(\P)$ yield $0$.
Define an ordering $\leq$ over $ E_1(\P)$ by letting $0\leq x$ (all $x\in E_1(\P)$),  $a_{i, j}\leq 1_{i, j}, \;  \bar a_{i, j}\leq 1_{i, j}, $ for $i\leq j<3$.  For $i, j<3$ let  $-_{(ij)}$ swap $0$ and $1_{i, j}$, and for $i\leq j<3$ it also swaps  $a_{i,j}$ and $\bar a_{i, j}$.    Extend $\DD,\RR,\KL,\KR$ to the new elements, by 
\begin{align*}
    \DD(\bar a_{i, j})&=e_{i, i}&\RR(\bar a_{i, j})&=e_{j,j}\\
    \KL(\bar a_{i, j})&=1_{i, i}&\KR(\bar a_{i, j})&=1_{j, j}
\end{align*}

Let $ E_1^\Sigma(\P)$ consist of all formal sums $\Sigma S$ where $S\subseteq  E_1(\P)$, for each $i<3$ there is at most one $j<3$ such that $a_{i, j}$  is in $S$ (for any $a\in\sqrt P^{j-i}$) and for each $j<3$ there is at most one $i<3$ such that $a_{i, j}$  is in $S$.  Note, we are not including $+, \Sigma$ in the signature, just using the symbols to name elements.  Functions $\DD, \RR, \KL, \KR, \comp$ may be extended to $ E_1^\Sigma(\P)$ by an additive convention, so for example $\DD(\bar a_{0, 1}+e_{2,2})\comp ((\KL\bar e_{1,1})+\bar e_{2,2})=(e_{0, 0}+e_{2,2})\comp(1_{1,1}+e_{2,2})=e_{2,2}$.

Let 
\begin{align*}
1&=\Sigma_{i, j<3} 1_{i, j}\\
    1'&=e_{0,0}+e_{1,1}+e_{2,2}\\
    \Ad(\Sigma S)&=\Sigma\set{e_{i, i}\mid  i<3,\; e_{i, }\comp \DD(\Sigma S)=0}\\
        \Ar(\Sigma S)&=\Sigma\set{e_{j, j}\mid  j<3,\;\DD(\Sigma S)\comp e_{j, j}=0}\\
        \Sigma S\join\Sigma T & =\Sigma(S\cup\set{t\in T\mid \DD(t)\comp\DD(\Sigma S)=\RR(t)\comp\RR(\Sigma S)=0}.\\
\end{align*}
We may write a formal sum $\Sigma S$ as $\Sigma_{i, j<3} x_{i, j}$,  but the restriction of  the uniqueness of any $x_{i, j}=a_{i, j}$ for given $i$ or for given $j$ still applies.  
Define an order over these sums by
\[\Sigma_{i,  j<3}x_{i, j}\leq \Sigma_{i,  j<3}y_{i, j} \iff \forall i, j<3(x_{i,j}\leq y_{i, j})\]
and define negation over $E_1^\Sigma(\P)$ by
\[ -\Sigma_{i, j<3}x_{i,j}=\Sigma_{i,  j<3} -_{(ij)} x_{i,j}\]

  Let 
  \begin{equation}\label{eq:m1}
       \E_1(\P)= (E_1^\Sigma(P), 0, 1, \leq, -,0',1', \DD, \RR, \Ad, \Ar, \KL, \KR, \join, \comp).
  \end{equation}
  For signatures including intersection and diversity we slightly modify this expansion.   For $i\leq j<3$ and $A\subseteq\sqrt P^{j-i}$ let $\overline A_{i, j}$ be a new element (thought of as the sum of all $g_{i, j}$ for $g\not\in A$ in some extension group). For $i\leq j<3$ we write $1_{i, j}$ for $\overline{\emptyset}_{i, j}$.   Additionally we include elements $1_{j, i}$ for $i<j<3$.  Let $E_2(\P)=E(\P)\cup \overline E(\P)\cup \set{1_{j, i}\mid i<j<3}$, where 
  \[\overline E(\P)=\set{\overline{A}_{i, j}\mid  i\leq j<3,\; A\subseteq \sqrt P^{j-i}}.\]  Consider the structure $(E_2(\P), 0,\cdot,   \DD, \RR, \KL,\KR, \comp)$, where meet is defined by $x\cdot x=x$ (all $x\in E_2(\P)$), \/ $\overline A_{i, j}\cdot \overline B_{i, j}= \overline{(A\cup B)}_{i, j}$ and in all other cases $x\cdot y=0$.  Other functions are defined by letting $0$ be a zero for all functions, and 
\begin{align*}
   \DD(a_{i,j})=\DD(\overline A_{i, j})&=e_{i, i}&
   \RR(a_{i, j})= \RR(\overline {A}_{i, j})& = e_{j, j}\\
  \KL(a_{i, j})&=e_{i, i}&
  \KL(\overline A_{i, j})=\KL(1_{i, j})&= 1_{i, i}\\
  \KR(a_{i, j})&=e_{j, j}&
  \KR(\overline A_{i, j})=\KR(1_{i, j})&= 1_{j, j}\\
  \overline{A}_{i, j}\comp b_{j, k}&=\overline{\set{a\comp b\mid a\in A}}_{i, k}&\overline{A}_{i, j}
  \comp\overline{B}_{j, k}&= 1_{i, k}
\end{align*}
and all remaining compositions yield $0$.

We  expand the base further to allow $\Ad, \Ar, \join, 0', 1'$ to be defined, as before.     
Let $ E_2^\Sigma(\P)$ consist of all formal sums $\Sigma_{i,  j<3} x_{i, j}$ where $x_{i, j}\in E_2(\P)$, for each $i<3$ there is at most one $j$ with $i\leq j<3$ and $x_{i, j}\in E(\P)\cup \overline E(\P)$, and for each $j<3$ there is at most one $i\leq j$ where $x_{i, j}\in E(\P)\cup \overline E(\P)$.    

Meet, domain,  range,  composition and kernels extend to $  E_2^\Sigma(\P)$ by an additive convention: $(\Sigma_{i\leq j<3} x_{i, j})\cdot(\Sigma_{i\leq j<3} y_{i, j})=\Sigma_{i\leq j<3}(x_{i,j}\cdot y_{i, j})$, \/ $\KL(\Sigma_{i\leq j<3}x_{i,j})=\Sigma_{i\leq j<3}\KL(x_{i, j}), \; \DD(\Sigma_{i\leq j<3} x_{i, j})=\Sigma_{i\leq j<3}\DD(x_{i, j}), \; (\Sigma_{i\leq j<3} x_{i, j})\comp (\Sigma_{i\leq j<3}y_{i, j})=\Sigma_{i\leq j\leq k<3}(x_{i,j}\comp y_{j, k})$ etc.   Let
 \begin{align*}
  1'&=e_{0,0}+e_{1,1}+e_{2,2}\\
  1&=\Sigma_{i\leq j<3} 1_{i, j}\\
  0'&=\Sigma_{i<3}\overline {e}_{i, i}+\Sigma_{i\neq j<3} 1_{i, j}.
  \end{align*}
Elements below $1'$  have complements in $E_2^\Sigma(\P)$, 
  $ \Sigma_{i\in I}e_{i,i} - \Sigma_{j\in J} e_{j,j} = \Sigma_{k\in I\setminus J} e_{k,k}$
  which allows us to extend  antidomain, antirange and $\join$ by
  \begin{align*} 
    \Ad(a)&=1'-\DD(a)\\
    \Ar(a)&=1'-\RR(a)\\
    \Sigma S\join \Sigma T&= 
    \Sigma(S\cup\set{t\in T\mid \DD (t)\comp\DD(\Sigma S)=\RR (t)\comp\RR(\Sigma S)=0}) 
 \end{align*} 
 we get a structure 
 \begin{equation}\label{eq:m2}
      \E_2(\P)=(  E_2^\Sigma(\P), 0,1, \cdot, 1',0',  \DD, \RR, \KL, \KR,  \Ad, \Ar, \join, \comp ).
 \end{equation}

\section{Proof of main Theorems}
\begin{defn}\label{def:bm}
Let $(G, *)$ be a  group where $|G|\geq 3$.   Let  $B_3(\G)$ be the inverse semigroup $(\set0\cup\set{g_{i,j}\mid i, j<3}, *, {}^{-1})$ where $*$ is defined  by $g_{i, j}*h_{j,k}=(g*h)_{i,k}$ for $i, j<3,\; g, h\in G$, all other products yield $0$, and $(g_{i, j})^{-1}=(g^{-1})_{j, i}$.

    Define $\theta:B_3(\G)\rightarrow\wp((3\times G)\times(3\times G))$ by
$0^\theta=\emptyset$ and
\begin{equation}\label{eq:theta}    g_{i, j}^\theta=\set{((i, h), (j, (h*g)))\mid h\in G},\end{equation} 
for $i\leq j<3$ and $g\in G$. 

\end{defn}
 Observe that    $g_{i, j}^\theta$ is a bijection from $i\times G$ to $j\times G$, and $(a*b)^\theta=a^\theta\comp b^\theta$, for $a, b\in B_3(\G)$, in fact $\theta$ is a generalised  Caley representation. 
  
  For all (partial) injections $\iota$  over $X$ we have 
\begin{equation}
    \label{eq:domker}
 (x, y)\in\DD(\iota)\iff (x=y\wedge\exists z (x, y)\in\iota)\iff (x, y)\in \KL(\iota)\end{equation}
where $\DD, \KL$ are evaluated in $\Rel(X)$, and a dual condition for co-kernel and range of $\iota$.

Also, for $g_{i, j}\in B_3(\G)$, \begin{equation}\label{eq:negker}
\KL(((i\times G)\times(j\times G))\setminus g_{i, j}^\theta)=(i\times G)\times(i\times G)
\end{equation}
using $|G|\geq 3$.

As an example of the power of Lemma \ref{lem:H}, we now prove Theorem \ref{thm:mainbiunary} using $\KL$ and $\KR$, which we recall  for the reader's convenience.

\thmmainbiunary*

\begin{proof} We show that the partial [finite] group embedding problem reduces to the problem in the theorem.    The reduction maps a finite square partial group $\P=(P, *)$ to the $\set{\KL, \KR, \comp}$-structure   $ \E_0(\P)=(E(\P), \KL, \KR, \comp)$ (formally, a reduct of what we defined in \eqref{eq:m0}).
Suppose $\P$ embeds into a [finite] group $(G,*)$. Then $E(\P)\subseteq B_3(G)$ (see Definition~\ref{def:bm}) and  $x\comp y=x*y,\; \KL(x)=\DD(x)=x*x^{-1},\;\KR(x)=\RR(x)=x^{-1}*x$                                                                                                                                                              by \eqref{eq:domker} and its dual.  Since $B_3(G)$ is a [finite] inverse semigroup, $ \E_0(\P)$ is a yes-instance of the Theorem's [finite] embedding problem. 

 Conversely,  suppose that  $ \E_0(\P)$ is a  $\{\KL,\KR,\comp\}$ subreduct of a \up[finite\up] unary semigroup $\lrangle{T, *, {}'}$ where  $x\comp y=x*y,\; \KL(x)=x*x'$ and $\KR(x)=x'*x$.  
Note that these term functions immediately give $\KL(x)\leq_{\mathscr{R}}x$ and $\KR(x)\leq_{\mathscr{L}}x$ for all $x\in T$.  
In $ E({\bf P})$ we have $\KL(a_{ij})=e_{ii}$, and $\KR(a_{ij})=e_{jj}$ by definition.  As $e_{ii}\comp a_{ij}=(e*a)_{ij}=a_{ij}$ and $a_{ij}\comp e_{jj}=(a*e)_{ij}=a_{ij}$ we have $a_{ij}\leq_{\mathscr{R}}e_{ii}=\KL(a_{ij})$ and $a_{ij}\leq_{\mathscr{L}}e_{jj}=\KR(a_{ij})$.  
So $a_{ij}\mathrel{\mathscr{L}}e_{ii}$ and $a_{ij}\mathrel{\mathscr{R}}e_{jj}$ in ${T}$.  
As $a$ was arbitrary and $\mathscr{L}$ and $\mathscr{R}$ are equivalence relations, we have that for any fixed $0\leq i\leq j<3$, all elements of the set $\{a_{ij}\mid a\in P\}$ are 
simultaneously $\mathscr{L}$-related and $\mathscr{R}$-related within~${\bf T}$.  
In other words, $\{a_{ij}\mid  a\in P\}$ lies within an $\mathscr{H}$-equivalence-class of~${ T}$ for any fixed $i,j$, 
so the homotopy  $(a\mapsto a_{0, 1}, \; b\mapsto b_{1,2},\; c\mapsto c_{0,2})$ (where $a, b\in \sqrt P,\; c\in P$) is an $\mathscr{H}$-embedding of ${\bf P}$ into $ E({\bf P})$.  
By Lemma~\ref{lem:sap}, ${\bf P}$ extends to a [finite] group, as required.
\end{proof}

We turn to the proof of  Theorem~\ref{thm:maincomplement}. We  begin by showing that the universal relation and empty relation are abstractly definable in the signature $\{\comp,-\}$.  
By a \emph{local equivalence relation} on a set~$X$, we mean a  symmetric transitive relation, i.e. a relation that is an equivalence relation on some subset $Y$ of $X$ and contains no tuples $(x,y)$ for which $x$ or $y$ are in $X\backslash Y$. 
\begin{lem}\label{lem:1}
Let $\theta$ be a $\{-,\comp\}$ representation of an algebra ${\bf A}=( A,-,\comp)$ as binary relations on a set $X$.
If $e\in A$ has $e\comp e=e$ and $(-e)\comp e=-e=e\comp (-e)$ then
\begin{enumerate}
\item  $e^\theta$ is a nonempty local equivalence relation, and
\item if $(-e)\comp (-e)=-e$ as well, then $e^\theta$ is the universal relation and $-e^\theta$ is the empty relation.
\end{enumerate}
\end{lem}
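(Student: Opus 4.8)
The plan is to transfer everything to the relational side. Writing $E:=e^\theta$ and $\overline{E}:=(X\times X)\setminus e^\theta$ for its complement, the fact that $\theta$ respects $-$ and $\comp$ turns the three hypotheses into the relational identities $E\comp E=E$, $\overline{E}\comp E=\overline{E}$ and $E\comp\overline{E}=\overline{E}$. Transitivity of $E$ is then immediate from $E\comp E\subseteq E$, and $E$ is nonempty because (assuming $X\neq\emptyset$) if $E=\emptyset$ then $\overline{E}=X\times X$ while $\overline{E}\comp E=\emptyset$, contradicting $\overline{E}\comp E=\overline{E}$. The only real content of part (1) is therefore symmetry, and establishing this is where I expect the main difficulty to lie.

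For symmetry I would read the two one-sided absorption laws contrapositively as pointwise implications. From $E\comp\overline{E}\subseteq\overline{E}$ one gets that common-source successors are $E$-related: if $(x,w)\in E$ and $(x,z)\in E$ then $(w,z)\in E$. Dually, $\overline{E}\comp E\subseteq\overline{E}$ gives that common-target predecessors are $E$-related: if $(v,z)\in E$ and $(w,z)\in E$ then $(v,w)\in E$. Now suppose $(x,y)\in E$. Applying the second implication with $v=w=x$ and $z=y$ yields reflexivity $(x,x)\in E$; feeding this into the first implication with $w=y$, $z=x$ gives $(y,x)\in E$. Hence $E$ is symmetric, and together with transitivity this makes $e^\theta$ a nonempty local equivalence relation, proving (1).

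For part (2), the additional hypothesis supplies $\overline{E}\comp\overline{E}=\overline{E}$, so the four identities $E\comp E=E$, $\overline{E}\comp\overline{E}=\overline{E}$, $E\comp\overline{E}=\overline{E}$, $\overline{E}\comp E=\overline{E}$ are now invariant under interchanging $E$ and $\overline{E}$. Thus the argument of part (1) applies verbatim to $\overline{E}$, showing it too is symmetric and transitive. For any symmetric transitive relation $R$ the set of points it touches equals its diagonal part $\{x:(x,x)\in R\}$, since a single pair forces the corresponding diagonal elements. Because $E$ and $\overline{E}$ partition $X\times X$, each diagonal element $(x,x)$ lies in exactly one of them; and a ``cross'' pair $(x,y)$ with $(x,x)\in E$ but $(y,y)\in\overline{E}$ would have to belong to one class while touching a point whose diagonal lies in the other, a contradiction. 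Hence one of the two diagonal sets is empty. Since $E\neq\emptyset$ forces its diagonal to be nonempty, it must be the diagonal of $\overline{E}$ that is empty, whence $\overline{E}=\emptyset$ and $E=X\times X$, as required.

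The crux of the whole argument is the symmetry step in part (1): the one-sided laws $E\comp\overline{E}=\overline{E}$ and $\overline{E}\comp E=\overline{E}$ look asymmetric, and the key maneuver is to extract from them the two successor/predecessor implications and then bootstrap through reflexivity. Once symmetry is in hand, part (2) is essentially a combinatorial partition argument and the remaining verifications are routine.
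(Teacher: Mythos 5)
Your part (1) is correct and is essentially the paper's own argument: the paper likewise derives symmetry by contradiction from the two mixed identities (routing through $(y,y)\in\overline{E}$ rather than through $(x,x)\in E$, a mirror image of your reflexivity bootstrap), and obtains transitivity from $E\comp E=E$ and nonemptiness from $\overline{E}\comp E=\overline{E}$ exactly as you do.

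In part (2) there is a genuine error in the justification, though not in the final conclusion. You claim that once $\overline{E}\comp\overline{E}=\overline{E}$ is added, the four identities become invariant under interchanging $E$ and $\overline{E}$, so that part (1) applies verbatim to $\overline{E}$. This is false: the swap turns $E\comp\overline{E}=\overline{E}$ into $\overline{E}\comp E=E$, which is not among your identities (and cannot hold here, since together with $\overline{E}\comp E=\overline{E}$ it would force $E=\overline{E}$). Moreover, if part (1) really did apply verbatim to $\overline{E}$, it would also deliver that $\overline{E}$ is \emph{nonempty} — the negation of what you are trying to prove — so the symmetry claim cannot be waved through this way. Fortunately the only two facts you use, symmetry and transitivity of $\overline{E}$, are true for simpler reasons: the complement of a symmetric relation is symmetric (this is precisely how the paper obtains it), and $\overline{E}\comp\overline{E}\subseteq\overline{E}$ is literally transitivity. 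With that repair, your diagonal/partition argument for part (2) is correct; it is a mild variant of the paper's, which instead fixes a point $x$ in the domain of $E$ and shows directly that $(x,y)\in\overline{E}$ is impossible for every $y$, then spreads $E$ to all of $X\times X$ by symmetry and transitivity.
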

\begin{proof}
(1) We first show that $e^\theta$ is symmetric.  Assume that $(x,y)\in e^\theta$.  If $(y,x)\in -e^\theta$ then $(y,y)\in (-e\comp e)^\theta=-e^\theta$ so that $(x,y)\in (e\comp -e)^\theta=-e^\theta$, contradicting $(x,y)\in e^\theta$ and the definition of $-$.  
Thus $e^\theta$ is symmetric and from $e\comp e=e$ it is also transitive, hence a local equivalence relation.   Nonemptiness follows from $(-e)\comp e=-e=e\comp (-e)$.

(2)  Now assume the additional property $(-e)\comp (-e)=-e$. Let $x$ be any point in the (nonempty) domain of $e^\theta$.  
Note that as $e^\theta$ is symmetric, so also is $(-e)^\theta$.  
For all $y\in X$, if $(x, y)\in (-e)^\theta$ then by symmetry  of $(-e)^\theta$ and $(-e)\comp(-e)=-e$ we deduce that $(x, x)\in (-e)^\theta$, contradicting our assumption that $x$ is in the domain of the local equivalence relation $e^\theta$.  
Thus, for all $y, z\in X$ we have $(x, y), (x, z)\in e^\theta$.  Again using the symmetry and transitivity of $e^\theta$ it follows that $(y, z)\in e^\theta$, for arbitrary $y, z\in X$.  Thus $e^\theta=\uu$ and $(-e)^\theta = \emptyset$.
\end{proof}
By a Boolean monoid we mean an algebra ${\bf M}=( M,0, 1, -, +,\cdot, 1', \comp)$ where $( M,-, +,\cdot, 0,1)$ is a Boolean algebra,  $\comp$ is additive, $0$ is a left and right zero and~$1'$ is an identity for composition.  
If in addition $1\comp x\comp 1=1$ for all $x\neq 0$ then the Boolean monoid is \emph{simple}. 
A representation of a Boolean monoid is usually defined to  be an isomorphism from the Boolean monoid to a Boolean monoid of 
binary relations over some set $X$ with natural operators, where $1$ is represented as a top equivalence relation and $-$ is represented as complementation
relative to this top.  However, here we insist  that $-$ is interpreted as complementation in $X\times X$, so that $1$ has to be represented as  $X\times X$.  Only simple Boolean monoids can have representations $\theta$ where
$1^\theta=X\times X$, and every representable, simple Boolean monoid has such a representation.
Note that the element $1$ in a Boolean monoid satisfies the conditions (1) and (2) of Lemma \ref{lem:1}, so that the lemma implies that any $\{-, \comp\}$-representation (where $-$ is represented a complement in the universal relation) is a reduct of a $\{-,\uu, \comp\}$-representation.
\begin{lem}\label{lem:2}
Let ${\bf M} = (M,0,1,-,+,\cdot, 1', \comp)$ be a normal, simple Boolean monoid and $\theta$ a $\{-,\comp\}$-representation for ${\bf M}$ over $X$.  Then there is a $\set{-,\comp}$-representation $\vartheta$ of ${\bf M}$ over a quotient of $X$ in which $1'^\vartheta$ is the identity relation.  
\end{lem}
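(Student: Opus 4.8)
The plan is to identify $1'^\theta$ as a genuine equivalence relation $E$ on $X$ and then represent $\mathbf M$ on the quotient $X/E$. First I would apply Lemma~\ref{lem:1}(1) to the element $e=1'$. Its hypotheses hold: $1'\comp 1'=1'$ because $1'$ is a composition identity, and since $-1'=0'$ satisfies $0'\comp 1'=0'=1'\comp 0'$ (again because $1'$ is a two-sided identity) we have $(-1')\comp 1'=-1'=1'\comp(-1')$. Hence $1'^\theta$ is a nonempty local equivalence relation, in particular symmetric and transitive.

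Next I would upgrade this to a full equivalence relation. The remark preceding the lemma gives $1^\theta=X\times X$ (this is where simplicity enters, via Lemma~\ref{lem:1}(2) applied to $1$). From $1'\comp 1=1$ we obtain $1'^\theta\comp(X\times X)=X\times X$, which forces $1'^\theta$ to have full domain. A symmetric transitive relation with full domain is reflexive on all of $X$, so $E:=1'^\theta$ is an equivalence relation on $X$. Let $\pi\colon X\onto X/E$ be the quotient map.

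The core structural fact I would then record is that every $a^\theta$ is a union of $E$-blocks on each side: since $1'$ is a two-sided identity, $1'\comp a\comp 1'=a$, so $a^\theta=E\comp a^\theta\comp E$, i.e.\ $(x,y)\in a^\theta$ together with $x\mathrel{E}x'$ and $y\mathrel{E}y'$ imply $(x',y')\in a^\theta$. I would then define $\vartheta$ by $a^\vartheta=\{(\pi x,\pi y)\mid (x,y)\in a^\theta\}$. The $E$-invariance lets me recover $a^\theta$ as the full $\pi$-preimage of $a^\vartheta$, which immediately yields injectivity of $\vartheta$ from injectivity of $\theta$, and gives $1'^\vartheta=\{(\pi x,\pi y)\mid x\mathrel{E}y\}=\{(\pi x,\pi x)\}$, the identity relation on $X/E$.

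It remains to check that $\vartheta$ preserves $\comp$ and $-$, and this is the step I expect to be the most delicate, since both verifications hinge on the $E$-invariance above. For composition, the inclusion $(a\comp b)^\vartheta\subseteq a^\vartheta\comp b^\vartheta$ is routine; the reverse requires that a class $\pi y$ splitting $(\pi x,\pi z)$ be realised by a single representative $y$, which is exactly where I would use that $b^\theta$ absorbs $E$ on the left (replacing the two witnesses $y_1\mathrel{E}y_2$ by one). For complement, the partition $a^\theta\sqcup(-a)^\theta=X\times X$ descends to $a^\vartheta\cup(-a)^\vartheta=(X/E)\times(X/E)$, and disjointness of the images follows because any common pair $(\pi x,\pi y)$ would, by $E$-invariance, force some single pair $(x',y')$ into both $a^\theta$ and $(-a)^\theta$, a contradiction. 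This establishes that $\vartheta$ is a $\{-,\comp\}$-representation of $\mathbf M$ over the quotient $X/E$ with $1'^\vartheta$ equal to the identity relation, as required.
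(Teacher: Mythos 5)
Your proof is correct and follows essentially the same route as the paper's: apply Lemma~\ref{lem:1} to conclude $1^\theta=X\times X$ and that $1'^\theta$ is a total equivalence relation, then pass to the quotient $X/(1'^\theta)$, using $1'\comp m=m\comp 1'=m$ to see the induced map is a well-defined injective $\{-,\comp\}$-homomorphism. You merely spell out in detail the verification the paper dismisses as ``easily seen''.
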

\begin{proof}
As already noted, Lemma \ref{lem:1} shows that  $1^\theta$ is the universal relation.  
Then $1'^\theta$ has domain and range equal to $X$ because $1'\comp 1=1=1\comp 1'$ and the domain and range of $1^\theta$ is~$X$.  
Now $1'\comp 1'=1'$ and $-1'\comp 1'=1'\comp -1'=-1'$ so that $1'^\theta$ is a local equivalence relation by Lemma \ref{lem:1}, hence a total equivalence relation since $(1')^\theta$ is total.  
Now we may represent over the quotient set $X/(1'^\theta)$, whose elements are the equivalence classes of $1'^\theta$; let $\vartheta$ denote the mapping ${\bf M}\into\wp(X/(1'^\theta))$ given by $(x/1'^\theta,y/1'^\theta)\in m^\vartheta$ if $(x,y)\in m^\theta$, for $m\in M$.   
Because $1'\comp m=m\comp 1'=m$ for all $m\in M$ it is easily seen that $\vartheta$ is a well-defined, injective  
$\{-,\comp\}$ homomorphism from $( M,-,\comp)$ into 
$({\wp(X/(1'^\theta)\times X/(1'^\theta)), \setminus,\comp\;})$.
\end{proof}
Lemma \ref{lem:2} shows that there is no loss of generality in assuming that a $\{-,\comp\}$-representation of a normal Boolean monoid is a $\{-,1',\comp\}$-representation.  
Note that if $X$ is finite then so also is any quotient of $X$.  
Recall the definition $0'=-1'$.

\begin{lem}\label{lem:4}
Let $\theta$ be a $\{-,1', \comp\}$-representation of a normal Boolean monoid ${\bf M}$.
If $e\comp 1=a \comp 1$ and $e$ is an idempotent element such that $\II_3(e)$ holds, then $e^\theta=\DD(a^\theta)=\set{(x, x)\mid x\in\dom(a^\theta)}$.  
If $1\comp e=1\comp a$  and $e$ is an idempotent  element where $\II_3(a)$ holds, then $e^\theta=\set{(y, y)\mid y\in\ran (a^\theta)}$.  
\end{lem}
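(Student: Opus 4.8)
The plan is to reduce both statements to a single structural fact about the representation: any idempotent $e$ for which $\II_3(e)$ holds is represented by a \emph{subidentity}, that is $e^\theta\subseteq\mathrm{Id}_X$. First I would fix the semantic setting supplied by the earlier lemmas. By Lemmas~\ref{lem:1} and~\ref{lem:2} I may assume $1^\theta=X\times X$ and $(1')^\theta=\mathrm{Id}_X$; consequently $(0')^\theta=(-1')^\theta$ is the diversity relation and the constant $\uu$ occurring in $\II_3$ may be read as $1$ (both being $X\times X$). In this way $\theta$ meets the hypotheses of Lemma~\ref{lem:inj} with $i=3$ (interpreting $\uu$ as $1$ and $0'$ as $-1'$), so from $\II_3(e)$ that lemma yields that $e^\theta$ is an injection, i.e.\ an injective partial function.

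The heart of the argument is the claim that an idempotent injective partial function $e^\theta$ is a subidentity, which I would prove pointwise. Suppose $(x,y)\in e^\theta$. Since $e^\theta=e^\theta\comp e^\theta$, there is $z$ with $(x,z),(z,y)\in e^\theta$; because $e^\theta$ is a partial function, $(x,y),(x,z)\in e^\theta$ force $y=z$, so $(y,y)\in e^\theta$. Now $(x,y)$ and $(y,y)$ share the second coordinate $y$, so injectivity of $e^\theta$ gives $x=y$. Thus every pair in $e^\theta$ is diagonal and $e^\theta=\set{(x,x)\mid x\in D}$ with $D=\dom(e^\theta)=\ran(e^\theta)$. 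This is the step I expect to carry the real content; everything that follows is a short composition calculation.

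Granting this, the first statement follows quickly. Using $1^\theta=X\times X$ and that $e^\theta$ is the subidentity on $D$, I compute $(e\comp 1)^\theta=e^\theta\comp(X\times X)=D\times X$, while $(a\comp 1)^\theta=a^\theta\comp(X\times X)=\dom(a^\theta)\times X$. The hypothesis $e\comp 1=a\comp 1$ then gives $D\times X=\dom(a^\theta)\times X$; as $\mathbf M$ is nontrivial we have $X\neq\emptyset$, so $D=\dom(a^\theta)$, and hence $e^\theta=\set{(x,x)\mid x\in\dom(a^\theta)}=\DD(a^\theta)$, as required.

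The second statement is entirely dual, interchanging $e\comp 1$ with $1\comp e$ and domains with ranges; here I again use that $e^\theta$ is an injection (the natural dual hypothesis $\II_3(e)$), so that by the same pointwise argument $e^\theta$ is the subidentity on $\ran(e^\theta)$. From $1\comp e=1\comp a$ I compute $(1\comp e)^\theta=(X\times X)\comp e^\theta=X\times\ran(e^\theta)$ and $(1\comp a)^\theta=X\times\ran(a^\theta)$, whence $\ran(e^\theta)=\ran(a^\theta)$ and $e^\theta=\set{(y,y)\mid y\in\ran(a^\theta)}$, as desired.
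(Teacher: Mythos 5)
Your proposal is correct and follows essentially the same route as the paper: derive injectivity of $e^\theta$ from $\II_3$ via Lemma~\ref{lem:inj}, show pointwise that an idempotent injective partial function must be a restriction of the identity, and then read off $\dom(a^\theta)$ (resp.\ $\ran(a^\theta)$) from $e\comp 1=a\comp 1$ (resp.\ $1\comp e=1\comp a$) using $1^\theta=X\times X$. You simply spell out the final composition computation and the appeal to Lemmas~\ref{lem:1} and~\ref{lem:2} that the paper leaves implicit, and you correctly read the second hypothesis as the dual $\II_3(e)$.
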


\begin{proof}
    Assume the conditions, so $e^\theta$ is injective, idempotent $e\comp e=e$.  Suppose $(x, y)\in e^\theta=e^\theta\comp e^\theta$.  Then there is $z$ such that $(x, z), (z, y) \in e^\theta$.  Since $e^\theta$ is an injection, $z=y$ and $z=x$, so $x=y$.  This proves that all idempotent injections are restrictions of the identity.  
    From $e\comp 1= a\comp 1$ we deduce $e^\theta=\DD(a^\theta)$, the final sentence is proved dually.
\end{proof}

In view of Lemma \ref{lem:inj} and \ref{lem:4}, the following is a restatement of Lemma \ref{lem:H}.
\begin{lem}\label{lem:5}
Let ${\bf M}$ be a simple Boolean monoid, representable in the signature $\{-,1', \comp\}$.   If $a,b$ are injective elements such that $\DD(a)=\DD(b)$ then $a\mathrel{\mathscr{R}}b$.  Similarly, if $a, b$ are injective and $\RR(a)=\RR(b)$ then $a\mathrel{\mathscr{L}}b$.
\end{lem}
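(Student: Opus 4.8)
The plan is to deduce the statement from Lemma~\ref{lem:H} by pushing the two hypotheses through a representation and then reading the resulting Green relation back into $\bf M$. First I would fix a representation $\theta$ of $\bf M$; by Lemma~\ref{lem:2} there is no loss in assuming $\theta$ is a $\{-,1',\comp\}$-representation over some $X$ with $1'^\theta=Id_X$ and $1^\theta=X\times X$, so that $\theta$ is an isomorphism of $\bf M$ onto the subalgebra $M^\theta\le\Rel(X)$. Since $a,b$ are injective elements, Lemma~\ref{lem:inj} gives that $a^\theta$ and $b^\theta$ are injective partial functions on $X$. From $\DD(a)=\DD(b)$, together with the interpretation forced on $\DD$ in any representation (equivalently Lemma~\ref{lem:4} applied to the idempotents $\DD(a),\DD(b)$), the relations $a^\theta$ and $b^\theta$ have a common domain, i.e.\ $a^\theta\equiv_\DD b^\theta$. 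Lemma~\ref{lem:H} then yields $a^\theta\mathrel{\mathscr R}b^\theta$; the range-dual input $\RR(a)=\RR(b)$ together with $\equiv_\RR$ gives $a^\theta\mathrel{\mathscr L}b^\theta$ by the dual clause of Lemma~\ref{lem:H}.

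It then remains to transport the relation back along the isomorphism $\theta$, i.e.\ to produce the witnessing multipliers inside $M^\theta$. Writing $e=\DD(a)=\DD(b)$, the proof of Lemma~\ref{lem:H} supplies the explicit multiplier $(b^\theta)^\conv\comp a^\theta$: since $b^\theta\comp(b^\theta)^\conv=\DD(b^\theta)=e^\theta=\DD(a^\theta)$ and $e^\theta\comp a^\theta=a^\theta$, we obtain $b^\theta\comp\big((b^\theta)^\conv\comp a^\theta\big)=a^\theta$, witnessing $a^\theta\leq_{\mathscr R}b^\theta$, and symmetrically $b^\theta\leq_{\mathscr R}a^\theta$. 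To conclude $a\mathrel{\mathscr R}b$ in $\bf M$ I must name elements $c,c'\in\bf M$ with $b\comp c=a$ and $a\comp c'=b$; the role of Lemmas~\ref{lem:inj} and~\ref{lem:4} is precisely that the injectivity of $a,b$ and their common domain idempotent $e$ are available as abstract data of $\bf M$ (for instance $e$ is recovered inside $\bf M$ as the idempotent with $e\comp 1=b\comp 1$ of Lemma~\ref{lem:4}), which is what lets the required multipliers be located in $\bf M$ and not merely in the ambient $\Rel(X)$.

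The main obstacle is exactly this last transfer. The natural multiplier $(b^\theta)^\conv\comp a^\theta$ is built using converse, which is absent from the signature $\{-,1',\comp\}$, so a priori it need not be the image of any element of $\bf M$; the substantive content of the lemma is therefore the claim that a $\{-,1',\comp\}$-nameable multiplier can be found. I would attack this using the idempotent $e=\DD(a)=\DD(b)$ furnished abstractly by Lemma~\ref{lem:4}: the equations $e\comp a=a$, $e\comp b=b$ reduce the problem to solving $b\comp c=a$ (and dually $a\comp c'=b$) within $\bf M$ using only that $b$ is injective with domain $e$, which is where the genuine work sits and where the injectivity hypothesis is indispensable. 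Once $a\leq_{\mathscr R}b$ and $b\leq_{\mathscr R}a$ are secured in $\bf M$ we have $a\mathrel{\mathscr R}b$, and the $\mathscr L$-statement follows by the verbatim range-dual argument, with $\RR$, $\equiv_\RR$ and the cokernel replacing $\DD$, $\equiv_\DD$ and the kernel.
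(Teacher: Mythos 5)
Your first paragraph is precisely the paper's proof, which is a one-liner: the paper states that, in view of Lemmas~\ref{lem:inj} and~\ref{lem:4}, Lemma~\ref{lem:5} is a \emph{restatement} of Lemma~\ref{lem:H}. That is, after normalising the representation via Lemma~\ref{lem:2}, injectivity of $a,b$ is the abstract condition $\II_3$ of Lemma~\ref{lem:inj} (so $a^\theta,b^\theta$ are injective partial functions), Lemma~\ref{lem:4} converts $\DD(a)=\DD(b)$ into $a^\theta\equiv_\DD b^\theta$, and Lemma~\ref{lem:H} gives $a^\theta\mathrel{\mathscr{R}}b^\theta$ \emph{in $\Rel(X)$}. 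The conclusion $a\mathrel{\mathscr{R}}b$ is meant exactly in that sense: the Green relations are evaluated in the ambient semigroup $\Rel(X)$ supplied by the representation, not in ${\bf M}$.

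The genuine gap is your second and third paragraphs, where you try to ``transport the relation back along $\theta$'' and find a $\{-,1',\comp\}$-nameable multiplier inside ${\bf M}$. You correctly observe that $(b^\theta)^\conv\comp a^\theta$ need not lie in $M^\theta$, and you leave the construction of an internal multiplier as ``the genuine work'' --- but this work cannot be done, because under your reading the lemma is false. Take $\P$ a square partial group that embeds in a group, so that the algebras built from it are representable; for distinct $a\neq b\in\sqrt{P}$ the elements $a_{0,1},b_{0,1}$ are injective with common domain $e_{0,0}$ and common range $e_{1,1}$, yet $x\comp a_{0,1}=b_{0,1}$ and $a_{0,1}\comp y=b_{0,1}$ have no solutions in the abstract algebra --- the second Remark in Section~\ref{sec:constructions} makes exactly this point, and the whole method depends on it: the homotopy only becomes an $\mathscr{H}$-embedding after passing to a proper extension semigroup, here $\Rel(X)$. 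No transfer back to ${\bf M}$ is needed in the application either: Lemma~\ref{lem:sap} asks only for an $\mathscr{H}$-embedding into \emph{some} [finite] semigroup, and $\Rel(X)$ (finite when $X$ is) serves in Lemma~\ref{lem:zz}. So the fix is to delete the transfer step and read the conclusion in $\Rel(X)$; your first paragraph then already constitutes a complete proof, matching the paper's.
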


\begin{lem}\label{lem:zz}
Let ${\bf P}$ be a square partial group.  If $ \E_1({\bf P})$ is $\set{-, \comp}$-representable \up[over a finite base\up] then  ${\bf P}$ embeds into a \up[finite\up] group.   
\end{lem}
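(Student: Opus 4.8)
The plan is to convert a $\{-,\comp\}$-representation of $\E_1(\P)$ into an $\mathscr{H}$-embedding of $\P$ into a [finite] full relation semigroup, and then to quote Sapir's Lemma~\ref{lem:sap}. So let $\theta$ be a $\{-,\comp\}$-representation of $\E_1(\P)$ over $X$. First I would normalise $\theta$. The element $1$ of $\E_1(\P)$ satisfies $1\comp 1=1$, $(-1)\comp 1=-1=1\comp(-1)$ and $(-1)\comp(-1)=-1$ (since $-1=0$), so Lemma~\ref{lem:1}(2) forces $1^\theta=\uu=X\times X$ and $0^\theta=\emptyset$. Consequently every equation in the signature $\{0',\uu,-,\comp\}$ — in particular the injectivity formula $\II_3$, read with $\uu:=1$ and $0':=-1'$ — can be evaluated in the abstract algebra and then transferred to $\theta$ via Lemma~\ref{lem:inj}. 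After checking that the Boolean-monoid reduct of $\E_1(\P)$ is simple and normal, Lemma~\ref{lem:2} lets me factor through the quotient $X/(1'^\theta)$, on which $1'$ becomes the identity relation; as a quotient of a finite set is finite, I may assume from now on that $\theta$ is a $\{-,1',\comp\}$-representation with $1'^\theta$ the identity relation.

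I would then reuse the homotopy $(\alpha,\beta,\gamma)$ of the construction, now with values in $\Rel(X)$: set $\alpha(a)=a_{0,1}^\theta$, $\beta(b)=b_{1,2}^\theta$ and $\gamma(c)=c_{0,2}^\theta$ for $a,b\in\sqrt{P}$ and $c\in P$. Since $\theta$ is an injective homomorphism for $\comp$ and $a_{0,1}\comp b_{1,2}=(a*b)_{0,2}$ holds in $\E_1(\P)$, this triple is a homotopy embedding. The crux is to promote it to an $\mathscr{H}$-embedding, that is, to show that for each fixed block $(i,j)\in\{(0,1),(1,2),(0,2)\}$ the whole family $\{x_{i,j}^\theta\}$ lies inside a single $\mathscr{H}$-class of $\Rel(X)$. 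To this end I would verify, entirely within the finite algebra $\E_1(\P)$, two facts: (i) $\E_1(\P)\models\II_3(x_{i,j})$ and $\E_1(\P)\models\II_3(e_{i,i})$, so that $x_{i,j}^\theta$ and the diagonal idempotents $e_{i,i}^\theta$ are injections by Lemma~\ref{lem:inj}; and (ii) the letter-independent identities $x_{i,j}\comp 1=e_{i,i}\comp 1$ and $1\comp x_{i,j}=1\comp e_{j,j}$, whence Lemma~\ref{lem:4} gives $\DD(x_{i,j}^\theta)=e_{i,i}^\theta$ and $\RR(x_{i,j}^\theta)=e_{j,j}^\theta$ for every letter $x$ of the block. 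Injectivity together with the shared domain $e_{i,i}^\theta$ then yields, by Lemma~\ref{lem:5} (the $\{-,1',\comp\}$ form of Lemma~\ref{lem:H}), that all $x_{i,j}^\theta$ in the block are $\mathscr{R}$-related in $\Rel(X)$, while the shared range $e_{j,j}^\theta$ makes them $\mathscr{L}$-related; hence the block lies in one $\mathscr{H}$-class.

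This establishes an $\mathscr{H}$-embedding of $\P$ into $\Rel(X)$, which is finite whenever $X$ is. Lemma~\ref{lem:sap} then delivers an embedding of $\P$ into a [finite] group, which is the desired conclusion; note that the finiteness of the relational semigroup $\Rel(X)$ is exactly what keeps the two versions (over an arbitrary base versus over a finite base) aligned.

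The step I expect to be the real obstacle is the verification inside $\E_1(\P)$ of the injectivity instances $\II_3(x_{i,j})$ (and of the domain/range identities of (ii)). These are precisely the computations for which the complementary letters $\bar a_{i,j}$ and the block-tops $1_{i,j}$, together with the entrywise definitions of $-$ and $\comp$ on formal sums, were introduced: one must track how $-(x_{i,j}\comp 0')$ comes to have full domain and $-(0'\comp x_{i,j})$ full range, so that the product $(-(x_{i,j}\comp 0'))\comp\uu\comp(-(0'\comp x_{i,j}))$ collapses to $\uu$. The calculation is finite and mechanical but delicate, and it is here that the specific design of the construction, rather than any general principle, carries the argument.
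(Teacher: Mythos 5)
Your proof follows essentially the same route as the paper's: the published argument is a two-line appeal to Lemma~\ref{lem:5} (injective elements with common domain and common range are $\mathscr{H}$-equivalent in any $\{-,1',\comp\}$-representation) followed by Lemma~\ref{lem:sap}, and your normalisation via Lemmas~\ref{lem:1} and~\ref{lem:2}, the injectivity check via $\II_3$ and Lemma~\ref{lem:inj}, and the domain/range identification via Lemma~\ref{lem:4} are exactly the chain of supporting lemmas that the paper's terse proof relies on implicitly. The one computation you defer --- verifying $\II_3(x_{i,j})$ and the identities $x_{i,j}\comp 1=e_{i,i}\comp 1$ and $1\comp x_{i,j}=1\comp e_{j,j}$ inside $\E_1(\P)$ --- is likewise left unstated in the paper, so your proposal is no less complete than the published argument.
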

\begin{proof}
If $ \E_1({\bf P})$ is $\set{\comp , -}$-representable, by Lemma \ref{lem:5} all injective elements with common domain and common range are $\mathscr H$-equivalent, hence $(a\mapsto a_{0,1}, b\mapsto b_{1,2}, c\mapsto c_{0,2})$ is an $\mathscr H$-embedding of $\P$ into $ E(\P)$.  By Lemma \ref{lem:sap}, ${\bf P}$ embeds into a [finite] group.    
\end{proof}
%
%

 Recall the first main theorem.
\thmmaincomplement*

\begin{proof}
Suppose $\P$ embeds into a [finite] group ${\bf G}= (G, *)$, without loss suppose $P\subseteq G$.     We may replace ${\bf G}$ by ${\bf Z}_3\times{\bf G}$ if necessary, so that 
\begin{equation}
   \label{eq:G} (G\setminus P)*(G\setminus P)=G.
\end{equation}

Recall $\theta$ from \eqref{eq:theta} and recall $ \E_1(\P) $ with base $E_1^\Sigma(\P)$ from \eqref{eq:m1}.  
Extend  $\theta\restriction  E(\P)$  to a map $\theta':  E_1(\P)\rightarrow \wp((3\times G)\times(3\times G))$, by letting 
\[ \bar a_{i, j}^{\theta'}=\bigcup_{g\in G\setminus\set a} g_{i, j}^\theta.\]
and extend $\theta'$ to $  E_1(\P)^\Sigma$ by $(\Sigma_{i\leq j<3} x_{ij})^{\theta^+}=\bigcup_{i\leq j<3} x_{ij}^{\theta'}$, for appropriate sums.  Then $\theta^+$  is a [finite] $\set{-, \comp }$-representation of $ \E_1(\P)$.

Conversely, if $\E_1(\P)$ is [finitely] $\set{-, \comp}$-representable then ${\bf P}$ embeds into a [finite] group, by Lemma~\ref{lem:zz}.   Hence the map ${\bf P}\mapsto   \E_1(\P)$ is a recursive reduction. Since the [finite] square partial group embedding problem is undecidable \cite{hirjac}, the theorem follows.
%
%
\end{proof}

Also recall the second main theorem.
\thmmainkernel*

\begin{proof}
Let ${\bf P}$ be a square partial group.  For signatures including negation but not meet we use  $ \E_1({\bf P})$, for signatures including meet but not complement we use $ \E_2({\bf P})$.
Both contain   $ \E_0(\P)$ as a  $\{\KL,\KR,\comp\}$ subreduct. 
Suppose $\P$ embeds into a [finite] group $(G, *)$.  The  restriction of $\theta$ (see \eqref{eq:theta}) to $  E(\P)$ also respects $\KL, \KR$, since all elements are injective, by \eqref{eq:domker}. The restriction of $\theta$ to $ \overline E(\P)$ also respects $\KL, \KR$ by \eqref{eq:negker}, hence these operators are respected over $ E_1(\P)$. Conversely, if $  E({\bf P})$ has been embedded as a $\{\KL,\KR,\comp\}$ semigroup into the unary semigroup of binary relations where $\KL(x)=x\comp x^\conv$ and $\KR(x)=x^\conv\comp x$, then  ${\bf P}$ embeds into a [finite] group, by  Theorem~\ref{thm:mainbiunary}, as required.
\end{proof}


\begin{remark}
Our arguments will not work if $+$ or ${}^\conv$ is included in the signature.  For example, if $a,b\in P$ then $\KL(a_{13}+b_{13})$ should equal 
\[
(a_{13}+b_{13})\comp (a_{13}+b_{13})^\conv= a_{13}\comp a_{13}^\conv+a_{13}\comp b_{13}^\conv+b_{13}\comp a_{13}^\conv+b_{13}\comp b_{13}^\conv
\]
and we have no a priori knowledge of what the elements $a_{13}\comp b_{13}^\conv$ and $b_{13}\comp a_{13}^\conv$ should look like. In many such cases however there are other reasons to achieve undecidability: converse-free signatures extending $\{+,\cdot,\comp\}$ and any signature extending $\{\cdot,{}^\smile,\comp\}$ are already known to have undecidable representability problems (see~\cite{neu} and \cite{hirjac} respectively).  
There are no known negative results on the undecidability of subsignatures of $\{0,1,+,\leq,1',{}^\conv,\DD,\RR,\Ad,\KL,\KR,\comp\}$, though many are known to have no finite axiomatisation: see Andr\'eka and Mikul\'as for example \cite{andmik}.  
An exception is the oasis signature $\{\leq,\DD,\RR,{}^\conv,\comp\}$ (including with $1'$) which as noted in the introduction has a simple finite axiomatisation due to Bredikhin \cite{bre}; see also Hirsch and Mikul\'as~\cite{hirmik} where the FRP is additionally established.  
\end{remark}
Finally recall the third main theorem.
\thmchain*
\begin{proof} Let $n\geq 1$.  The proof of  Theorem \ref{thm:mainbiunary} goes through with $\KLn{n}, \KRn{n}$ for $\KL, \KR$, (using terms 
\[
\KLn{n}(x)= \stackrel{n\, \mbox{\tiny times}}{\overbrace{(x*x')*\ldots *(x*x')}}\ \text{ and }\ \KRn{n}(x)= \stackrel{n\, \mbox{\tiny times}}{\overbrace{(x'*x)*\ldots *(x'*x)}}).
\] so each signature 
$\{\KLn{n},\KRn{n},\comp\}$ has UDR and UDFR: this required only embeddability as subreduct of a unary semigroup, and binary relations form a unary semigroup.  Here we show that with respect to representability as binary relations, every term that can be expressed in the signature $\{\KLn{n},\KRn{n},\comp\}$ for infinitely many $n$ is expressible in the signature~$\{\comp\}$.  In order to track the definability of terms,  each  $\{\KLn{n},\KRn{n},\comp\}$-definable term  $t$ is equivalent to a $\set{ {}^\conv,\comp}$-term  and we let the \emph{size} of $t$ be the length of the shortest equivalent $\set{ {}^\conv,\comp}$-term.
   In Proposition~3 of~\cite{sch}, Schein showed that the free involuted semigroups are representable as algebras of binary relations: in other words, the variety generated by the representable $\{{}^\conv,\comp\}$ algebras is  the variety of all involuted semigroups.  Using the involuted semigroup axioms $(x\comp y)^\conv\approx y^\conv\comp x^\conv$ and $(x^\conv)^\conv\approx x$, it is easy to see that every term $t(x_1,\dots,x_n)$ in the language $\{{}^\conv,\comp\}$ can be written as a $\{\comp\}$-term in terms of $\{x_1,\dots,x_n,x_1^\conv,\dots,x_n^\conv\}$.  Moreover, this representation of terms is unique (up to associativity), because the involuted semigroup axioms do not enable any further identifications.  This is well known, but can be routinely proved by noting that the free semigroup on the alphabet $\{x_1,\dots,x_n,x_1^\conv,\dots,x_n^\conv\}$ becomes an involuted semigroup by defining a unary operation in accordance with the obvious left-to-right applications of the the laws $(x\comp y)^\conv\approx y^\conv\comp x^\conv$ and $(x^\conv)^\conv\approx x$.  
  As $\KLn{n}(x)$ and $\KRn{n}(x)$ (for any $n$) are terms in the involuted semigroup signature, of length $2n$, it follows that  any term $t$ in the signature $\{\comp,\KLn{n},\KRn{n}\}$ that includes at least one $\KLn{n}$ or $\KRn{n}$, has size at least $2n$.    It follows immediately that if $t$ is expressible in $\{\KLn{n},\KRn{n},\comp\}$ for infinitely many $n$, then it is a $\{\comp\}$-term.  This is the claim in Theorem \ref{thm:chain}.  
  \end{proof}


\section{Appendix: the Kernel Problem}

Every binary relation $r$ has a kernel $\KL(r)$  defined by
\[
\KL:=\{(x,y)\in X\times X\mid  \exists z (x,z)\in r\And (y,z)\in r\}=r\comp r^\smile
\] 
and cokernel  $\KR(r):=r^\smile\comp r$.  When $r$ is a partial map (a function from some subset of $X$ into $X$) then the kernel is well known to be an equivalence relation on the domain of $r$.  In general the kernel need not be an equivalence relation, and neither need a relation $r$ necessarily be a partial map  when $\KL(r)$ is an equivalence relation.  
These operations are natural enough in their own right and have been found to play a role in the context of \emph{partition monoids}; see Proposition~4.2 in~\cite{easgra} for example. They also make an interesting parallel to the class of \emph{ample semigroups}, (originally introduced as type~A semigroups by Fountain~\cite{fou}).  Ample semigroups may be thought of as a $\{\DD,\RR,\comp\}$ approximation to the abstract models of algebras of injective partial maps in the signature $\{{}^\conv,\comp\}$.  
If we restrict our binary relations to the class of injective partial maps on $X$, then the signature $\{{}^\conv,\comp\}$ corresponds to the class of inverse semigroups by way of the Wagner-Preston representation~\cite{clipre},  the operations $\DD$, $\KL$ coincide,  and $\RR$, $\KR$ also coincide.  
Term reducts to the signature $\{\DD,\RR,\comp\}$, or equivalently $\{\KL,\KR,\comp\}$, are examples of ample semigroups, though not every ample semigroup arises in this way.  
Indeed, the representability of algebras in the signature $\{\DD,\RR,\comp\}\equiv \{\KL,\KR,\comp\}$ \emph{as injective partial maps} is known to be undecidable (Gould and Kambites \cite{goukam}; see \cite[Theorem 6.2]{jacsto:BS}).   

We consider the problem of recognising when a relation on a finite set is actually the kernel of some other relation on that set.  The following two lemmas were observed by James East.
\begin{lem}\label{lem:equiv}
A relation is fixed by $\KL$ if and only if it is a local equivalence relation, and likewise for $\KR$.
\end{lem}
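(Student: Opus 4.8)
The plan is to argue directly from the set-theoretic definition $\KL(r)=\{(x,y)\mid \exists z\,((x,z)\in r\wedge (y,z)\in r)\}$ and to prove the two implications separately, exploiting throughout the fact recorded before Lemma~\ref{lem:1} that a local equivalence relation is \emph{exactly} a symmetric and transitive relation. The only preliminary observation I would isolate is that $\KL(r)$ is \emph{always} symmetric, since its defining condition $\exists z\,((x,z)\in r\wedge(y,z)\in r)$ is visibly symmetric in $x$ and $y$; this is what makes the whole equivalence tick.

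For the forward direction I would assume $\KL(r)=r$. Symmetry of $r$ is then immediate from the preliminary observation. For transitivity, suppose $(x,y),(y,w)\in r$; applying symmetry to the second pair gives $(w,y)\in r$, so $z=y$ witnesses $(x,y)\in r$ and $(w,y)\in r$, whence $(x,w)\in \KL(r)=r$. Thus $r$ is symmetric and transitive, i.e. a local equivalence relation.

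For the converse I would assume $r$ symmetric and transitive. To see $\KL(r)\subseteq r$, take $(x,y)\in\KL(r)$ with witness $z$; then $(x,z)\in r$ and, by symmetry, $(z,y)\in r$, so transitivity yields $(x,y)\in r$. For $r\subseteq\KL(r)$, take $(x,y)\in r$; symmetry gives $(y,x)\in r$ and then transitivity gives $(y,y)\in r$, so $z=y$ witnesses $(x,y)\in\KL(r)$. Hence $\KL(r)=r$. The statement for $\KR$ is proved identically after replacing $\KL(r)=r\comp r^\conv$ by $\KR(r)=r^\conv\comp r=\{(x,y)\mid \exists z\,((z,x)\in r\wedge (z,y)\in r)\}$, which is symmetric in the second coordinates instead of the first.

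There is no real obstacle here; the one point worth flagging is the inclusion $r\subseteq\KL(r)$, which quietly uses that a symmetric transitive relation is reflexive on its support (the elements actually appearing in $r$) — this is what supplies the required witness $(y,y)\in r$, and it is precisely the feature by which being ``fixed by $\KL$'' forces genuine (local) equivalence behaviour rather than mere symmetry.
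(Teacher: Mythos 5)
Your proof is correct and follows essentially the same route as the paper: the forward direction uses the identical argument (symmetry of $\KL(r)$ is automatic, and transitivity follows by using symmetry to flip $(y,w)$ into $(w,y)$ and then reading off $(x,w)\in\KL(r)=r$), while the converse, which the paper dismisses as trivial, you write out in full — correctly, including the point that local reflexivity supplies the witness for $r\subseteq\KL(r)$.
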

\begin{proof}
The right to left implication is trivial.  Assume now that $r$ is a relation with $\KL(r)=r$.  The definition of $\KL$ ensures that $r$ is reflexive on its domain, and symmetric.  We need to prove transitivity.  Assume $(x,y),(y,z)\in r$.  As $r$ is symmetric we have $(z,y)\in r$ also, and then as $(x,y)$ and $(z,y)\in r$ we have $(x,z)\in \KL(r)=r$ as required.
\end{proof}

The following two facts do not play a role in the rest of the paper, but are of interest in their own right. 
\begin{lem}\label{lem:graph}
A relation $r\in \Rel(X)$ is the kernel of some relation  if and only if it is is symmetric, locally reflexive and, the binary relation $r$ has an edge covering by at most $|X|$-many cliques \up(ignoring loops\up). 
\end{lem}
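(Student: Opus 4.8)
\textbf{The plan} is to characterise kernels via the combinatorial structure they impose. By Lemma~\ref{lem:equiv}, a relation fixed by $\KL$ is exactly a local equivalence relation, but a general kernel $\KL(r)$ need not be transitive, so I would first analyse what structure $\KL(r)=r\comp r^\smile$ actually produces. Writing $(x,y)\in\KL(r)$ iff there is a common ``target'' $z$ with $(x,z),(y,z)\in r$, the key observation is that for each fixed $z$ the set $C_z:=\{x\mid (x,z)\in r\}$ is a clique of $\KL(r)$ (every pair of its elements is $\KL(r)$-related through $z$), and these cliques cover all non-loop edges of $\KL(r)$. Since the distinct targets $z$ lie in $X$, there are at most $|X|$ such cliques. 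This gives the forward direction: any kernel is symmetric, locally reflexive (reflexive on its domain), and admits an edge covering by at most $|X|$ cliques.

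For the converse I would reverse this construction. Suppose $r$ is symmetric, locally reflexive, and its non-loop edges are covered by cliques $C_1,\dots,C_k$ with $k\le |X|$. The aim is to build a relation $s$ with $\KL(s)=r$. I would assign to each clique $C_m$ a distinct ``witness point'' $z_m\in X$ (possible since $k\le|X|$) and define $s$ so that $(x,z_m)\in s$ precisely when $x\in C_m$; that is, $s=\bigcup_m (C_m\times\{z_m\})$, together with loops $(x,x)$ for each $x$ in the domain of $r$ to handle local reflexivity. Then computing $\KL(s)=s\comp s^\smile$: two points $x,y$ are $\KL(s)$-related iff they share a target, i.e. iff they lie in a common clique $C_m$ (or are joined via the loop witnesses), which by the clique-covering hypothesis holds exactly when $(x,y)\in r$. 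The symmetry and local reflexivity of $r$ are needed to match the diagonal and the built-in symmetry of any kernel.

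\textbf{The main obstacle} I anticipate is the careful bookkeeping of loops and the domain, and in particular ensuring the witness assignment uses the full available budget of $|X|$ points without collisions that would merge distinct cliques or spuriously relate points. Because kernels are automatically symmetric and locally reflexive, the diagonal behaviour of $r$ must be reproduced exactly, so I would need to verify that the loops $(x,x)$ contribute only the correct reflexive pairs to $\KL(s)$ and do not create edges between distinct domain points. A subtle point is that a point $x$ may belong to several cliques and also serve as a witness $z_m$; I would either reserve witness points disjoint from the covered vertices or check directly that any overlap still yields $\KL(s)=r$ rather than a strict superset. Once the witness construction is pinned down, the verification that $\KL(s)=r$ reduces to the clique-covering equivalence, and both inclusions follow from unwinding the definition of $\KL$.
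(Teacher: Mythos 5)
Your forward direction is exactly the paper's: the columns $C_z=\{x\mid (x,z)\in s\}$ of the relation $s$ whose kernel is being taken are cliques that cover the kernel, and there are at most $|X|$ of them (note that your displayed definition $C_z:=\{x\mid(x,z)\in r\}$ has the roles of $r$ and $s$ switched relative to the surrounding sentence). The gap is in the converse, precisely at the point you flag. Your construction adds loops $(x,x)$ to $s$ for every $x\in\dom(r)$ in order to realise local reflexivity at domain points missed by the edge cover, and you propose to neutralise the resulting interference either by choosing witness points outside the covered vertex set or by checking directly that overlaps are harmless. Neither works. If the cover genuinely needs $k=|X|$ cliques while the covered vertices form a proper subset of $X$, there is no room to place the witnesses outside $\dom(r)$; and an overlap really does produce a strict superset: if $z_m$ is an isolated domain point carrying a loop in $s$ and also serves as witness for a clique $C_m\not\ni z_m$, then every $x\in C_m$ has $(x,z_m),(z_m,z_m)\in s$, whence $(x,z_m)\in\KL(s)\setminus r$.

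This is not mere bookkeeping: with the cover required only of the non-loop edges, the statement is false, so no arrangement of loops can rescue the construction. Take $X=\{1,\dots,9\}$ and let $r$ consist of all loops together with the $K_{3,3}$ edges between $\{1,2,3\}$ and $\{4,5,6\}$. Then $r$ is symmetric and locally reflexive and its nine non-loop edges are covered by nine two-element cliques, so the stated condition holds; but if $r=s\comp s^\conv$ then $r=\bigcup_{y\in X}C_y\times C_y$ with $C_y=\{x\mid (x,y)\in s\}$, each $C_y$ is a clique of $r$, triangle-freeness of $K_{3,3}$ forces all nine sets $C_y$ to be the nine edges, and then $(7,7)\notin\bigcup_y C_y\times C_y$. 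The condition that actually characterises kernels --- and the one the paper's necessity argument in fact establishes, since its cliques ``cover $r$'' loops and all --- is that $r$ be a union of at most $|X|$ squares $C\times C$, i.e.\ that the at most $|X|$ cliques cover the loops as well as the edges. Under that hypothesis your witness construction works with no added loops: $s=\bigcup_m C_m\times\{z_m\}$ gives $\KL(s)=\bigcup_m C_m\times C_m=r$ directly. So you should strengthen the covering hypothesis to include loops and delete the loop-patching from your converse; as written, that step fails.
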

\begin{proof}
Suppose $r$ is the kernel of some $s$, so $r=s\comp s^\conv$ in $\Rel(X)$. Clearly $r$ is symmetric and reflexive over its domain.  Also, for each point $y$ in the range of $s$, the set $\{x\in X\mid  (x,y)\in s\}$ is a clique  of $r$, and these cliques cover $r$, so that the stated condition holds.  It is sufficient because  if it is satisfied by $r$ then we may select any injective function $\nu$ from the maximal cliques of  $r$ into $X$ and define a relation $s$ in $\Rel(X)$ by letting $(x,y)\in s$ whenever $x\in \nu^{-1}(y)$.
\end{proof}
An instance $(X, r)$ of the \emph{kernel problem} consists of a finite set $X$ and a binary relation $r\subseteq X\times X$, it is a yes-instance if $r$ is the kernel of some $s\in\Rel(X)$ and a no-instance otherwise.
\begin{cor}
The  kernel problem is \texttt{NP}-complete.
\end{cor}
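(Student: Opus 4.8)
The plan is to prove \texttt{NP}-completeness in two parts: membership in \texttt{NP} and \texttt{NP}-hardness, both leveraging the combinatorial characterization from Lemma~\ref{lem:graph}. By that lemma, $(X,r)$ is a yes-instance of the kernel problem if and only if $r$ is symmetric, locally reflexive, and the edges of $r$ (ignoring loops) can be covered by at most $|X|$ cliques. The symmetry and local reflexivity conditions are checkable in polynomial time, so the entire difficulty is concentrated in the clique edge-cover condition.

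For membership in \texttt{NP}, I would first verify in polynomial time that $r$ is symmetric and locally reflexive. A witness for a yes-instance is then a list of at most $|X|$ vertex subsets $C_1,\dots,C_k$ (with $k\leq |X|$); the verifier checks that each $C_i$ is a clique of $r$ and that every non-loop edge of $r$ lies in some $C_i$. This witness has size polynomial in $|X|$ and is checkable in polynomial time, giving the upper bound.

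For \texttt{NP}-hardness, I would reduce from the \emph{edge clique cover} problem (equivalently, \emph{keyword conflict} / \emph{intersection graph basis}), which is a classical \texttt{NP}-complete problem: given a graph $H$ and an integer $\ell$, decide whether the edges of $H$ can be covered by $\ell$ cliques. The subtlety is that Lemma~\ref{lem:graph} fixes the number of permitted cliques at exactly $|X|$, the cardinality of the base set, whereas edge clique cover takes $\ell$ as a free parameter. The plan is therefore to pad: given an instance $(H,\ell)$ of edge clique cover with $H$ on vertex set $V$, construct $X$ by adjoining $\max(0,\ell-|V|)$ (or sufficiently many) isolated points to $V$ and let $r$ be the symmetric reflexive-on-its-domain relation whose non-loop edges are exactly the edges of $H$. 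The isolated padding vertices contribute no edges to cover but enlarge $|X|$ so that the bound ``$\leq|X|$ cliques'' coincides with the intended bound $\leq\ell$; one must choose the padding so that the existence of an edge clique cover of $H$ with $\ell$ cliques matches exactly the existence of a cover with $|X|$ cliques.

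The main obstacle I anticipate is precisely calibrating this padding so that the clique-count threshold $|X|$ faithfully encodes the target $\ell$ without accidentally making every instance trivially coverable. Since any graph on $m$ edges is coverable by $m$ single-edge cliques, and any graph on $v$ vertices is edge-coverable by at most $\binom{v}{2}$ cliques, I must ensure the reduction lands in the regime where $\ell$ is the genuinely binding constraint; this is the standard difficulty in reductions to clique cover with a cardinality bound tied to the vertex set, and it is resolved by a careful but routine choice of the number of padding vertices together with the observation that isolated vertices may always be placed in trivial singleton cliques that contribute nothing to the edge cover. Once the encoding is fixed, correctness follows immediately from Lemma~\ref{lem:graph}, completing both directions.
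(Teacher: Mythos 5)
Your NP membership argument is fine (it is essentially equivalent to the paper's, which simply guesses the relation $s$ and checks $r=s\comp s^\conv$), and your choice of starting problem for the hardness reduction matches the paper's (Orlin's intersection number / clique edge cover problem). The gap is in the padding. You correctly identify the obstacle --- Lemma~\ref{lem:graph} hard-wires the clique budget to $|X|$, while the source problem supplies a free parameter $\ell$ --- but your proposed resolution does not work in the case $\ell<|V|$. Adjoining isolated vertices can only \emph{increase} $|X|$ and hence only \emph{relax} the budget; isolated vertices ``contribute nothing to the edge cover'' precisely in the sense that they consume none of the budget, so they cannot absorb the excess $|X|-\ell>0$. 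With your construction, when $\ell<|V|$ the kernel instance asks for a cover by at most $|V|$ cliques, which is strictly weaker than a cover by at most $\ell$ cliques, and the reduction is not correct. (It would only be salvageable if edge clique cover were known to remain NP-hard restricted to instances with $\ell\geq|V|$, which you neither claim nor prove.)

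The missing idea is a disjoint gadget whose edges \emph{force} a known number of cliques to be spent, so that the surplus budget is consumed exactly. The paper's choice is the complete bipartite graph $K_{2,\,|V|-\ell+2}$ (with loops added at all its nodes): since the irreflexive part is bipartite, every non-loop edge is itself a maximal clique, so covering this component requires exactly $2(|V|-\ell+2)$ cliques, and the vertex count is calibrated so that $\ell+2(|V|-\ell+2)=|V'|$. With that gadget in place, a cover of the whole instance by $|V'|$ cliques exists iff $H$ has a cover by $\ell$ cliques, and correctness follows from Lemma~\ref{lem:graph} as you intended. Your case $\ell\geq|V|$ is handled correctly by padding (the paper uses a single extra clique rather than isolated points, but either works there); it is only the $\ell<|V|$ case that needs this additional construction.
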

\begin{proof}
It is clear that the {kernel problem} may be solved nondeterministically by guessing a relation $s$ and checking if $r=s\comp s^\conv\in\Rel(X)$.  
We prove NP-hardness by providing a reduction from the Clique Edge Colour Problem (CEC, also known as the Intersection Number problem), in which we are given a finite graph $G=(V,E)$ and a number $k\leq |E|$ and wish to know if there is a covering of the edges $E$ by at most $k$ many complete subgraphs (cliques) of $G$. 
The CEC problem is known to be \texttt{NP}-complete~\cite{orl}.  
So, given an instance $(V, E, k)$ of CEC, first check whether $k\geq|V|$ or not.  
If $k\geq |V|$ we reduce $(V, E, k)$ to $(V', E')$ where $V'\supset V$ is a set with $k+1$ vertices and $E'=E\cup \operatorname{Id}_{(V'\setminus V)}\cup (V'\setminus V)\times (V'\setminus V))$, in other words $(V', E')$ is the disjoint union of $(V, E)$ and a single clique  with ${k-|V|+1}$ nodes.  
Then $(V, E, k)$ is a yes-instance of CEC iff  there is a covering of $E$ by at most $k$ many cliques if and only if $(V', E')$ has an edge covering by at most $k+1$ cliques iff $(V', E')$ is a yes-instance of the kernel problem, for $k\geq |V|$.  
Otherwise $k<|V|$ and  let $(V', E')$ be the disjoint union of $(V, E)$ with the complete  graph $K_{2, |V|-k+2}$ bipartite except there are loops at all nodes,  so $|V'|= |V|+(|V|-k+4)$.  
Each edge between distinct nodes  of $K_{2, |V|-k+2}$ is a maximal clique, by bipartiteness of the irreflexive 
part, so the smallest number of cliques that cover the edges of  this part is $2(|V|-k+2)$.  
We reduce $(V, E, k)$ to $(V', E')$
and then $(V, E, k)$ is a yes-instance of $CEC$ iff $(V', E')$ has 
a covering by at most $k+2(|V|-k+2)=|V'|$ cliques iff $(V', E')$ is a yes-instance of the kernel problem, so the reduction is correct when $k<|V|$ too.\end{proof}

\bibliographystyle{amsplain}

\end{document}